\numberwithin{equation}{section}
\newtheorem{theorem}{Theorem}[section]
\newtheorem{lemma}[theorem]{Lemma}
\newtheorem{proposition}[theorem]{Proposition}
\newtheorem{corollary}[theorem]{Corollary}
\newtheorem{question}[theorem]{Question}
\newtheorem{questions}[theorem]{Questions}
\theoremstyle{definition}
\newtheorem{definition}[theorem]{Definition}
\theoremstyle{remark}
\newtheorem{remark}[theorem]{Remark}
\newtheorem{convention}[theorem]{Convention}
\newtheorem{notation}[theorem]{Notation}
\newcommand{\Gcal}{{\mathcal{G}}}
\newcommand{\Lcal}{{\mathcal{L}}}
\newcommand{\Rcal}{{\mathcal{R}}}
\newcommand{\id}{\mathord{\mathrm{id}}}
\newcommand{\mpl}{\mathop{\mathrm{mpl}}}
\newcommand{\Sym}{\mathop{\mathrm{Sym}}}
\begin{document}
\title{Braces and symmetric groups with special conditions}
\author{Ferran Ced\'{o}, Tatiana Gateva-Ivanova, Agata Smoktunowicz}

\subjclass[2010]{Primary   16T25, 16W22, 16N20, 16N40, 20F16, 81R50}
\keywords{Yang--Baxter Equation, set-theoretic solutions, brace,
braided group, Jacobson radical.\\
* The first author was partially supported by the grant  MINECO MTM2014-53644-P\\
** The second author was partially supported by Grant I 02/18
 of the Bulgarian National Science Fund, by ICTP, Trieste,  and by Max-Planck Institute for Mathematics, Bonn\\
*** The third author was supported by ERC Advanced grant 320974. }

\date{\today}

\begin{abstract}  We study symmetric groups and left braces satisfying special conditions,
or identities. We are particularly interested in the impact of
conditions like \textbf{Raut} and \textbf{lri} on the properties of
the symmetric group and its associated brace. We show that the symmetric group $G=G(X,r)$
associated to a nontrivial solution $(X,r)$ has multipermutation
level $2$ if and only if $G$ satisfies  \textbf{lri}.
In the special case of a two-sided brace we express each of the conditions \textbf{lri} and \textbf{Raut} as identities on the
associated radical ring $G_*$.
 We apply these to construct examples of two-sided braces satisfying some prescribed
conditions. In particular we construct a finite two-sided brace with
condition \textbf{Raut} which does not satisfy \textbf{lri}. (It is known that condition \textbf{lri} always implies \textbf{Raut}).
We show that a finitely
generated two-sided brace which  satisfies \textbf{lri} has a
finite multipermutation level which is bounded by the number of its
generators.
\end{abstract}
\maketitle

\setcounter{tocdepth}{1}
\tableofcontents

\section{Preliminaries}
\label{Preliminaries}

A quadratic set is a pair $(X,r)$, where $X$ is a non-empty set and
$r\colon X\times X\rightarrow X\times X$ is a bijective map. Recall
that $(X,r)$ is involutive if $r^2=\id_{X^2}$.
The image of $(x,y)$ under $r$ is presented as
$r(x,y)=({}^xy,x^{y}).$
Consider the maps
$\mathcal{L}_x,\mathcal{R}_x\colon X\rightarrow X$ defined by
$$\mathcal{L}_x(y)={^x}y\quad\mbox{and}\quad \mathcal{R}_x(y)=y^x,$$
for all $x,y\in X$. The quadratic set $(X,r)$ is non-degenerate if
$\mathcal{L}_x$ and $\mathcal{R}_x$ are bijective for all $x\in X$.
The map $r$ is \emph{a set-theoretic solution of the
Yang--Baxter equation} (YBE) if  the braid relation
\[r^{12}r^{23}r^{12} = r^{23}r^{12}r^{23}\]
holds in $X\times X\times X,$  where  $r^{12} = r\times\id_X$, and
$r^{23}=\id_X\times r$.  In this
case $(X,r)$ is called \emph{a braided set}.
A braided set $(X,r)$ with $r$ involutive is called \emph{a
symmetric set}.

\begin{convention} In this paper "\emph{a solution}"  means
"\emph{an involutive non-degenerate set-theoretic solution of the
YBE }", or equivalently, "\emph{a non-degenerate symmetric set}" $(X,r)$.
\end{convention}
\emph{A left brace} is a triple $(G,+,\cdot)$, where $G$ is a set,
$+$ and $\cdot$ are two binary operations, such that $(G,+)$ is an
abelian group, $(G,\cdot)$ is a group and
\begin{eqnarray}\label{lbrace}
&& a\cdot (b+c)+a=a\cdot b+a\cdot c,
\end{eqnarray}
for all $a,b,c\in G$.  The group $(G,+)$ is the additive group of
the left brace and $(G,\cdot)$ is its multiplicative group. \emph{A
right brace} is defined similarly, but replacing property
(\ref{lbrace}) by $(b+c)\cdot a+a=b\cdot a+ c\cdot a$. If
$(G,+,\cdot)$ is both a left and a right brace (for the same
operations), then it is called \emph{a two-sided brace}.

It is known that if $(G,+,\cdot)$ is a left brace,  and  $0$ and
$e$, respectively,  denote the neutral elements with respect to the
two operations "$+$" and "$\cdot$" in $G$, then $0 = e$.

In any left brace $(G,+,\cdot)$ one defines the operation $*$ by the
rule:
\begin{equation}
\label{eqleftbrace*}
a*b=a\cdot b-a-b,  \; a, b \in G.
\end{equation}
 It is known and easy to check that $*$ is left
distributive with respect to the sum $+$. In general $*$ is not
right distributive, nor associative, but it satisfies the following
condition
\begin{eqnarray}\label{rumpdef}
&&(a*b+a+b)*c=a*(b*c)+a*c+b*c, \; \forall a,b,c \in G,
\end{eqnarray}
see the original definition of right brace of
Rump  \cite[Definition~2]{Ru07}. It is also known that
$(G,+,\cdot)$ is a two-sided brace if and only if $(G,+,*)$ is a
Jacobson radical ring.

Takeuchi introduced the notions of a braided group and a symmetric
group as the group versions of a braided set and a symmetric set,
respectively, \cite{Takeuchi}. We recall the definitions.

\emph{A braided group}  is a pair  $(G, r)$, where $G$ is a group and
$r: G\times G \longrightarrow G\times G, \quad
r(a, b) = ({}^ab, a^b)$ is a  bijective map satisfying the
following conditions
\[
\begin{array}{llll}
\textbf{ML0}:\quad&{}^a1 =1, \; {}^1u =u, \quad &\textbf{MR0}:\quad & 1^u =1,
\; a^1 =a,
\\
\textbf{ML1}: &{}^{ab}u ={}^a{({}^bu)},\quad &\textbf{MR1}:& a^{uv} =(a^u)^v,
\\
\textbf{ML2}:&{}^a{(u.v)} =({}^au)({}^{a^u}v),\quad &\textbf{MR2}:& (a.b)^u
=(a^{{}^bu})(b^u), \\
\end{array}
\]
and the compatibility condition
\[ \mathbf{M3}:\quad uv = ({}^uv).(u^v),\]
for all $a,b,u,v\in G$.
For each braided group $(G, r)$ the map $r$ is  \emph{a braiding operator}, so $(G, r)$ is a braided set, see \cite{LYZ}, see also \cite{Takeuchi}.

\emph{A symmetric group} is a braided group $(G, r)$ with an involutive braiding operator $r$.
 Each symmetric group $(G,r)$ is a nondegenerate symmetric set, that is $(G,r)$ is a solution.

It was proven by the second author that symmetric groups and left
braces are equivalent structures, see \cite{Tatyana} Theorem 3.6.
More precisely, the following hold.

(i) Every symmetric group $(G, r)$ has a canonically associated structure of a left brace $(G, +, \cdot)$, where the
 operation "$+$" on $G$ is defined via
 \begin{equation}
\label{eqoperation1}
 a + b := a\cdot({}^{a^{-1}}b), \;\text{or equivalently},\; a+{}^ab = a\cdot b, \; a,b \in G.
 \end{equation}

(ii)  Conversely, every left brace $(G, +, \cdot)$ has a canonically associated structure of a symmetric
group $(G,r)$, that is
a group with a braiding operator  $r: G\times G \longrightarrow G\times G$, $r(a,b) := ({}^ab, a^b),$  with left and right actions of $G$ upon itself given by the formulae
  \begin{equation}
\label{eqoperation2}
 {}^ab:=a\cdot b-a=a*b+b, \quad\quad a^{b}:={}^{({}^ab)^{-1}}a, \; \; \forall\; a, b \in G.
  \end{equation}
 Moreover, the following condition holds in $G$
\[\mathbf{Laut:}\quad ^{a}(b+c)={^{a}}b+{^{a}}c,\quad\forall a,b,c\in
G.\]
By convention a symmetric groups $(G, r)$ is always considered
together with the associated left brace $(G, +, \cdot)$ and vice
versa.

For each solution $(X,r)$ of the YBE
Etingof, Schedler
and Soloviev introduced in \cite{ESS} two groups:
the structure
group $G=G(X,r)$ and the permutation group
$\mathcal{G}=\mathcal{G}(X,r)$. The
structure group $G$ is generated by $X$ and has quadratic defining relations
$xy={^x}yx^y$, for all $x,y\in X$. (The group $G(X,r)$ is also called the YB-group of $(X,r)$).
The set $X$ is embedded in $G$.
The group $G$ acts on the left (and on the right) on the set $X$, so the assignment $x\mapsto
\mathcal{L}_x$ extends to a group homomorphism
$\Lcal: G(X,r) \longrightarrow \Sym(X),  \; a \mapsto  \Lcal_a \in Sym(X)$, where $\Lcal_a(x) = {}^ax$.
By definition the permutation group $\Gcal=\Gcal(X,r)$ is the image $\Lcal(G(X,r))$ of $G$.
The
group $\Gcal$ is generated by the set $\{\Lcal_x \mid x \in X\}$.
It is known, see \cite{LYZ}, that
 there is unique braiding operator
$r_G: G \times G \longrightarrow G \times G $, such that the restriction of $r_G$ on $X\times X$ is exactly the map $r$. We call $(G, r_G)$ \emph{the symmetric group associated to} $(X,r)$.
Moreover, the epimorphism $\Lcal:G(X,r) \longrightarrow \Gcal(X,r)$ is a braiding preserving map which induces a canonical structure of a symmetric group $(\Gcal, r_{\Gcal})$, see
\cite{Tatyana}  (or \cite{CJO14} for the equivalent version in the
language of left braces).

An ideal of a left brace $(G,+,\cdot)$ is a normal subgroup $I$ of
its multiplicative group which is invariant with respect to the left action of $G$ upon itself,
i.e. $^{a}b\in I$ for all $a\in G$ and
all $b\in I$.  It is known that every ideal $I$ of $(G,+,\cdot)$ is
a subgroup of its additive group, and is invariant with respect to the right action of $G$.

Each left brace $(G, +, \cdot)$ has several invariant decreasing
chains of subsets.

The  series $G^{(n)}$, introduced by Rump, \cite{Ru07}, consists  of
ideals of $G$:
\begin{equation}
\label{Rideals1} G=G^{(1)} \supseteq G^{(2)}\supseteq
G^{(3)}\supseteq\cdots, \; \text{where}\;   \; G^{(n+1)}=G^{(n)}*G,
n \geq 1.
\end{equation}
The second series, $G^{n}$, \cite{Ru07}, is defined as
\begin{equation}
\label{Rumpchain2} G= G^{1} \supseteq G^{2}\supseteq
G^{3}\supseteq\cdots, \; \text{where}\;   G^{n+1}=G*G^{n},  \;n \geq
1.
\end{equation}

Recall the following definition.
\begin{definition} \cite{GIM08}
\label{lri&cl} Let $(X,r)$ be a quadratic set.
\begin{enumerate}
\item  The following are called \emph{cyclic conditions on}  $X$.
\[\begin{array}{lclc}
 {\rm\bf cl1:}\quad&{}^{(y^x)}x= {}^yx, \quad\text{for all}\; x,y \in
 X;
 \quad\quad&{\rm\bf cr1:}\quad &x^{({}^xy)}= x^y, \quad\text{for all}\; x,y
 \in
X;\\
 {\rm\bf cl2:}\quad&{}^{({}^xy)}x= {}^yx,
\quad\text{for all}\; x,y \in X; \quad\quad &{\rm\bf cr2:}\quad
&x^{(y^x)}= x^y, \quad\text{for all}\; x,y \in X.
\end{array}\]
$(X,r)$ is called \emph{cyclic} if it satisfies all cyclic
conditions.
\item Condition \textbf{lri} is defined as
 \[ \textbf{lri:}\quad
\quad ({}^xy)^x= y={}^x{(y^x)}, \;\text{for all} \quad x,y \in X.\]
In other words \textbf{lri} holds if and only if $(X,r)$ is
non-degenerate and $\Rcal_x=\Lcal_{x^{-1}}$ and $\Lcal_x =
\Rcal_{x^{-1}}$.
\end{enumerate}
\end{definition}

Symmetric groups and their braces with special conditions on the
actions like \textbf{lri} or \textbf{Raut} were studied first in
\cite{Tatyana}. Here we continue this study (i) for general symmetric groups $(G, r)$, and
(ii) under the additional assumption that the associated left brace
$(G,+,\cdot)$ is a two-sided brace.

\begin{definition} \cite{Tatyana} A left brace $(G, +, \cdot)$ satisfies condition \textbf{Raut} if
\[\textbf{Raut}: \quad (a+b)^{c}=a^{c}+b^{c}, \: \forall a, b, c \in G.\]
\end{definition}

Note that condition \textbf{lri} on  the symmetric group $(G, r)$
implies that the left and the right actions of the group $G$ upon itself are mutually inverse,
while condition \textbf{Raut}  links the two parallel structures-
the symmetric group structure and  the brace structure of $G$.

\begin{notation}
\label{notation} We shall use notation as in \cite{Tatyana}. As
usual, given a solution $(X,r)$,  $G = G(X,r)$ denotes its structure
group, and $\Gcal= \Gcal(X,r)$ denotes its permutation group. The
canonically associated symmetric groups will be denoted by $(G,
r_G)$ and $(\Gcal, r_{\Gcal})$, respectively. In the case when
$(X,r)$ is a multipermutation solution of level $m$ we shall write
$\mpl (X,r) = m.$ Given a two-sided brace $(G, +, \cdot)$,  the
associated Jacobson radical ring is denoted by $G_*= (G, +, *)$
\end{notation}

\section{Left braces $(G, +, \cdot)$, the operation $*$ and some identities}
We study symmetric groups $(G,r)$ and left braces $(G, +, \cdot)$
satisfying the identity  $(a*b)*c = a*(b*c)$,  for all $a,b,c \in
G$, or equivalently, $(G,*)$ is a semigroup with zero ($e=0$ is a zero element in $(G , *)$). Clearly, if
$(G, +, \cdot)$ is a two-sided brace, then $(G, *)$ is a semigroup.
In particular, we are
interested in the following questions.
\begin{questions}
\begin{enumerate}
\item
What can be said about symmetric sets $(X,r)$ for which some of the symmetric groups $G = G(X,r)$, or $\Gcal = \Gcal(X,r)$
has associative law for the operation $*$?
\item
Does it exist a left brace $(G, +, \cdot)$, such that $(G, *)$ is a
semigroup, but $(G, +, \cdot)$ is not a two-sided brace?
\end{enumerate}
 \end{questions}
It is known that if $(X,r)$ is a solution, then $G(X,r)$ is a
two-sided brace \emph{iff} $(X,r)$ is a trivial solution,
\cite[Theorem~6.3]{Tatyana}. We shall prove that in the special case
when $G= G(X,r)$ is the symmetric group of a solution $(X,r)$, $(G,
*)$ is a semigroup if and only if $G$ is a two-sided brace, and therefore $(X,r)$ is a trivial solution, see Corollary \ref{CorPr*}.

\begin{proposition}
\label{Pr*} Let $(G,r)$ be a symmetric group and  let $(G, +,
\cdot)$ be the corresponding left brace. Suppose $(G, *)$ is a
semigroup and the additive group $(G,+)$ has no elements of order
two.
 Then $(G, +, \cdot)$ is a two-sided brace, or equivalently,  $(G, +, *)$ is a Jacobson radical ring.
\end{proposition}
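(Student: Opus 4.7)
The plan is to establish right distributivity of $*$ over $+$, i.e.\ $(a+d)*c=a*c+d*c$ for all $a,d,c\in G$, since combined with the left distributivity (valid in any left brace) and the hypothesis that $*$ is associative this is exactly what makes $(G,+,*)$ a Jacobson radical ring. Starting from Rump's identity $(a*b+a+b)*c=a*(b*c)+a*c+b*c$ and using $a*(b*c)=(a*b)*c$ together with $a*b+a+b=a\cdot b$, I first obtain the clean identity
\[
(a\cdot b)*c=(a*b)*c+a*c+b*c\qquad(a,b,c\in G).
\]
Specialising to $b=-a$ and using \textbf{Laut} to compute $a*(-a)=-(a*a)$, hence $a\cdot(-a)=a+{}^a(-a)=a+a*(-a)-a=-(a*a)$, the elements $a\cdot(-a)$ and $a*(-a)$ coincide; the identity collapses to $0=a*c+(-a)*c$, so the map $b\mapsto b*c$ is odd: $(-b)*c=-(b*c)$.

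Next I substitute $b\mapsto -b$ in the clean identity. Using \textbf{Laut} (which also gives $a*(-b)=-(a*b)$) and the oddness just obtained I arrive at the pair
\[
(a+b+a*b)*c=a*c+b*c+(a*b)*c,
\]
\[
(a-b-a*b)*c=a*c-b*c-(a*b)*c.
\]
Adding them, the $\pm b*c$ and $\pm(a*b)*c$ terms cancel on the right; on the left the arguments become $a\pm(b+a*b)=a\pm{}^ab$, giving
\[
(a+{}^ab)*c+(a-{}^ab)*c=2(a*c).
\]
Since $\Lcal_a\colon b\mapsto{}^ab$ is a bijection of $G$ (in any left brace the left action is an additive automorphism), as $b$ ranges over $G$ so does ${}^ab$; renaming produces the parallelogram identity
\[
(a+d)*c+(a-d)*c=2(a*c)\qquad\text{for all } a,d\in G.
\]

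To conclude, swap $a$ and $d$ in the parallelogram identity and use oddness to rewrite $(d-a)*c=-\bigl((a-d)*c\bigr)$; this yields $(a+d)*c-(a-d)*c=2(d*c)$. Adding the two gives $2\bigl((a+d)*c\bigr)=2(a*c+d*c)$, and the hypothesis that $(G,+)$ has no element of order two lets me cancel the $2$, producing $(a+d)*c=a*c+d*c$. Hence $(G,+,\cdot)$ is a two-sided brace. The main subtlety I expect is the passage from the special identity involving $a\pm{}^ab$ to the full parallelogram identity via the bijectivity of $\Lcal_a$; once that step is in place, the no 2-torsion hypothesis fits in exactly to finish the argument.
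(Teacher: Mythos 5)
Your argument is correct, and it diverges from the paper's after the first step. Both proofs begin identically: you apply Rump's identity $(a*b+a+b)*c=a*(b*c)+a*c+b*c$ with $b=-a$ and use associativity of $*$ to cancel the term $a*[(-a)*c]=[a*(-a)]*c$, obtaining the oddness identity $(-a)*c=-(a*c)$. At that point the paper simply observes that the hypotheses of \cite[Theorem~13]{Sm} are now met and cites that external result to conclude that $(G,+,*)$ is a Jacobson radical ring; the no-$2$-torsion hypothesis is consumed inside the citation. You instead finish the proof from scratch: substituting $b\mapsto -b$ into the identity $(a\cdot b)*c=(a*b)*c+a*c+b*c$, adding the two instances, and using surjectivity of $\Lcal_a\colon b\mapsto b+a*b$ (valid in any left brace, since ${}^ab=a\cdot b-a$) to obtain the parallelogram identity $(a+d)*c+(a-d)*c=2(a*c)$ for all $a,d,c$; then the swap-and-add polarization step plus cancellation of the factor $2$ yields right distributivity, and Remark~\ref{rem1} closes the argument. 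Each intermediate step checks out (in particular $a\cdot(-a)=a*(-a)=-(a*a)$ via left distributivity, and $a*0=0$ since $0=e$). What your route buys is a self-contained, elementary proof that makes visible exactly where the absence of $2$-torsion is used, namely in the final cancellation; what the paper's route buys is brevity, at the cost of outsourcing the distributivity argument to \cite{Sm}.
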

\begin{proof}
We shall prove that
\begin{equation}
\label{AgaId}
(-a)*b=-(a *b), \; \forall a,b \in G.
\end{equation}
By (\ref{rumpdef}), we have
\[
[a+(-a)+a*(-a)]*b = a*b+(-a)*b+ a*[(-a)*b], \quad \forall a,b \in G.
\]
This together with the obvious equality $[a+(-a)+a*(-a)]*b = [a*(-a)]*b$, and the associative law in $(G, *)$
imply
\[
a*[(-a)*b] = [a*(-a)]*b = a*b+(-a)*b+ a*[(-a)*b], \quad \forall a,b \in G.
\]
It follows that $a*b+(-a)*b=0$, so the identity (\ref{AgaId}) holds
in $G$. Note that $(G, +,*)$ satisfies the hypothesis of
\cite[Theorem~13]{Sm}, and therefore $(G,+,*)$ is a Jacobson radical
ring.
\end{proof}

An easy consequence of Proposition~\ref{Pr*} and
\cite[Corollari~5.16]{Tatyana} is the following result.
\begin{corollary}
\label{CorPr*}
Let $(X,r)$ be a solution, $(G,r_G)$, $(\Gcal, r_{\Gcal})$,  $(G,+,\cdot)$, $(\Gcal,+,\cdot)$ in usual notation.
\begin{enumerate}
\item  $(G, *)$ is a semigroup if and only if $(G, +, \cdot)$ is a two-sided brace, so in this case $(X,r)$ is the trivial solution.
\item Suppose  the additive group $(\Gcal,+)$ has no elements of order two  ($a + a \neq e, \forall a \in \Gcal$).
 Then $(\Gcal, *)$ is a semigroup if and only if it is a two-sided brace. Moreover, if $X$ is a finite set, then $(X,r)$,  is a multipermutation solution, and
 \[0 \leq \mpl (\Gcal, r_{\Gcal}) = m - 1\leq \mpl(X,r) \leq \mpl(G,r_G) = m < \infty.\]
\end{enumerate}
\end{corollary}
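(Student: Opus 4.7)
The plan is to derive both parts by combining \propref{Pr*} with two well-known facts about $G(X,r)$: its additive group is torsion-free, and its being a two-sided brace already has a structural characterisation quoted in the preceding paragraph. Part (2) is then almost immediate from \propref{Pr*}, while the multipermutation part requires \cite[Corollary~5.16]{Tatyana} plus the observation that a finite Jacobson radical ring is nilpotent.

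For part (1), one direction is a tautology: if $(G,+,\cdot)$ is a two-sided brace then $(G,+,*)$ is a Jacobson radical ring, so $*$ is associative and $(G,*)$ is a semigroup. For the converse I would invoke the Etingof--Schedler--Soloviev description of the structure brace: $(G(X,r),+)$ is a free abelian group of rank $|X|$, and in particular $a+a\neq 0$ for all $a\neq 0$. This is exactly the hypothesis of \propref{Pr*}, which therefore yields that $(G,+,\cdot)$ is a two-sided brace. The trailing clause "$(X,r)$ is the trivial solution" is then just a restatement of \cite[Theorem~6.3]{Tatyana}, which is quoted in the excerpt just above the proposition. The converse within this clause is also immediate: if $(X,r)$ is trivial, then $G$ is the free abelian group on $X$, $a*b=0$ identically, so both $(G,*)$ is a (zero) semigroup and $(G,+,\cdot)$ is a two-sided brace.

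For part (2), the first equivalence is precisely \propref{Pr*} applied to the permutation brace $(\Gcal,+,\cdot)$: the hypothesis on $2$-torsion in $(\Gcal,+)$ is now assumed, so $(\Gcal,*)$ being a semigroup forces $(\Gcal,+,\cdot)$ to be a two-sided brace. Assume now that $X$ is finite. Then $\Gcal$ is finite, so $(\Gcal,+,*)$ is a finite Jacobson radical ring, hence a nilpotent ring, hence both Rump chains $\Gcal^{(n)}$ and $\Gcal^{n}$ terminate at $\{0\}$ in finitely many steps. Since the $\Gcal^{(n)}$ filtration governs the multipermutation level of the associated symmetric group, this gives $\mpl(\Gcal,r_{\Gcal})<\infty$; set $m-1:=\mpl(\Gcal,r_{\Gcal})$. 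The remaining chain $\mpl(\Gcal,r_{\Gcal})=m-1\leq\mpl(X,r)\leq\mpl(G,r_G)=m$ is precisely what \cite[Corollary~5.16]{Tatyana} asserts for a finite solution whose permutation group is a two-sided brace, so one simply cites it.

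The main obstacle, if any, is bookkeeping: one must verify that the hypothesis of \propref{Pr*} is met in each part (free-abelian additive group in (1); the stated assumption in (2)) and that the multipermutation relations in the final display are indeed covered by the range of applicability of \cite[Corollary~5.16]{Tatyana} rather than requiring a separate argument. No new computation beyond \propref{Pr*} itself is needed.
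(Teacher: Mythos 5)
Your proposal is correct and follows essentially the same route the paper intends: the authors explicitly present this corollary as "an easy consequence of Proposition~\ref{Pr*} and \cite[Corollari~5.16]{Tatyana}", and your argument supplies exactly the missing details — the free-abelian (hence $2$-torsion-free) additive group of $G(X,r)$ to meet the hypothesis of Proposition~\ref{Pr*} in part (1), the quoted \cite[Theorem~6.3]{Tatyana} for triviality, and \cite[Corollary~5.16]{Tatyana} for the multipermutation chain in part (2). No discrepancy worth noting.
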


Recall that the series $G^n$ and $G^{(n)}$ of a left brace are defined by (\ref{Rideals1}), and (\ref{Rumpchain2}).

\begin{lemma}
\label{lem*}
Let $(G,+,\cdot)$ be a left brace.  Suppose that
$(G,*)$ is a semigroup. Then $G^n\subseteq G^{(n)}$ for all positive
integers $n$.
\end{lemma}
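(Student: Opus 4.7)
The plan is to prove the inclusion by induction on $n$, bridged by an auxiliary family of additive subgroups. The base case $n=1$ is immediate since $G^1 = G = G^{(1)}$. For $n \geq 1$, I would introduce $H_n \subseteq (G,+)$, the additive subgroup generated by all $n$-fold $*$-products $a_1 * a_2 * \cdots * a_n$ with $a_i \in G$ (the bracketing is immaterial by associativity of $*$), and prove the chain
\[ G^n \subseteq H_n \subseteq G^{(n)}. \]

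The first inclusion $G^n \subseteq H_n$ follows by induction on $n$: an additive generator of $G^{n+1} = G * G^n$ has the form $a * x$ with $a \in G$ and $x \in G^n \subseteq H_n$ (by inductive hypothesis); writing $x = \sum_i \varepsilon_i p_i$ as a $\mathbb{Z}$-linear combination of $n$-fold $*$-products $p_i$, left distributivity of $*$ over $+$ (together with $a*(-b) = -(a*b)$, which is a consequence of left distributivity) yields $a*x = \sum_i \varepsilon_i (a*p_i)$, and each $a*p_i$ is an $(n+1)$-fold $*$-product lying in $H_{n+1}$. The second inclusion $H_n \subseteq G^{(n)}$ also goes by induction on $n$: an additive generator $a_1 * a_2 * \cdots * a_{n+1}$ of $H_{n+1}$ equals, by associativity, $q * a_{n+1}$ with $q := a_1 * \cdots * a_n$, a single element of $H_n$; by the inductive hypothesis $q \in G^{(n)}$, so this generator lies in $G^{(n)} * G = G^{(n+1)}$.

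The key obstacle to avoid is that right distributivity of $*$ over $+$ does \emph{not} hold in a general left brace, so a direct attempt to simplify an expression such as $x*a$ with $x \in G^{(n)}$ presented as a sum of $*$-products would fail. The interposition of $H_n$ circumvents this difficulty: the first inclusion uses only left distributivity (applied to $a*(\sum_i \varepsilon_i p_i)$), while the second uses only the assumed associativity of $*$; right distributivity is never invoked.
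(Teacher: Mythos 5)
Your proof is correct and follows essentially the same route as the paper: the paper likewise shows (by induction, using left distributivity of $*$ over $+$) that $G^n$ consists of sums of $n$-fold $*$-products, and then uses associativity to left-bracket each such product and place it in $G^{(n)}$. Your explicit interposition of the subgroups $H_n$ and the remark that right distributivity is never needed merely make the paper's argument more transparent.
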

\begin{proof}
We shall use induction on $n$ to prove the equality of sets
\begin{equation}
\label{eq_series1}
G^n=\{\sum_{i=1}^kg_{i,1}*\cdots *g_{i,n}\mid k \mbox{ is a positive integer},\mbox{ and }g_{i,j}\in G \}.
\end{equation}
For $n=2$,  one has $G^2 = G*G=G^{(2)}$ by definition, thus \[G^2=\{\sum_{i=1}^kg_i*h_i\mid k \mbox{ is
a positive integer},\; g_{i},h_i\in G \}.\]
Let $n > 2$ and assume (\ref{eq_series1}) is true for all $m<n$.
By (\ref{Rumpchain2}) one has
\begin{equation}
\label{eq_series2}
G^n=G*G^{n-1}= \{\sum_{i=1}^kg_i*h_i\mid k \mbox{ is
a positive integer},\; g_{i}\in G,\; h_i\in G^{n-1} \}.
\end{equation}
By the induction hypothesis every pair $g \in G, h\in G^{n-1}$ satisfies
\begin{eqnarray*}
g*h&=&g*\sum_{i=1}^{k}g_{i,1}*\cdots *g_{i,n-1}\\
&=&\sum_{i=1}^kg*g_{i,1}*\cdots *g_{i,n-1},
\end{eqnarray*}
where $g_{i,j}\in G$.
This together with (\ref{eq_series2}) implies the desired equality of sets (\ref{eq_series1}).

It is clear that  $g_{1}*\cdots *g_{n}=(\dots (g_{1}*g_2)*\cdots)*g_n\in
G^{(n)}$, whenever $g_i \in G, 1 \leq i\leq n$. Therefore $G^n\subseteq G^{(n)}$.
\end{proof}

\begin{remark}
\label{rem1} Let $G$ be a set  with two operations "$\cdot$" and
"$+$" such that  $(G,\cdot)$ is a group, and $(G,+)$ is an abelian
group. (We do not assume $(G, +, \cdot)$ is a brace). Let $*$ be a
new operation on $G$ defined by (\ref{eqleftbrace*}).
\begin{enumerate}
\item $(G,+, \cdot)$ is a left brace if and only if   $(G, + , *)$ satisfies a left distributive law:
 $ a*(b+c) = a*b + a*c, \quad \forall\; a,b,c \in G$.
\item
 $(G,+, \cdot)$ is a right brace if and only if   $(G, + , *)$ satisfies a right distributive law:
$ (a+b)*c = a*c + b*c, \quad \forall\; a,b,c \in G$.
 \end{enumerate}
 \end{remark}

\begin{lemma}
\label{LemmaPro*}
Let $(G, +, \cdot)$ be a left brace, such that $(G, *)$  is  semigroup. If  $G^{n}= 0$ for some positive integer $n$ then
 $(G, +, \cdot)$ is a two-sided brace.
\end{lemma}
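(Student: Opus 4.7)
The plan is to iterate a simple identity for the right-distributivity defect
\[D(a,b,c) := (a+b)*c - a*c - b*c\]
until its second argument is forced into $G^{n}=0$. The entire argument is an identity chase based on Rump's condition (\ref{rumpdef}) and the associativity of $*$; neither the no-$2$-torsion hypothesis of Proposition~\ref{Pr*} nor Smoktunowicz's ring criterion will be needed.

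First I would combine (\ref{rumpdef}) with the associativity of $*$ to replace $a*(b*c)$ by $(a*b)*c$, obtaining
\[(a*b+a+b)*c=(a*b)*c+a*c+b*c.\]
Reading the left-hand side as $\bigl((a+b)+(a*b)\bigr)*c$ and comparing with the definition of $D$ at the pair $(a+b,\,a*b)$ in the first two slots then yields the key recurrence
\[D(a,b,c)=-D(a+b,\,a*b,\,c).\]

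Next I would set $a_{0}=a$, $b_{0}=b$ and define recursively $a_{k+1}=a_{k}+b_{k}$ and $b_{k+1}=a_{k}*b_{k}$. A short induction on $k$ then gives
\[D(a,b,c)=(-1)^{k}\,D(a_{k},b_{k},c)\quad\text{and}\quad b_{k}\in G^{k+1},\]
the second statement being immediate from the defining relation $G^{k+2}=G*G^{k+1}$ in (\ref{Rumpchain2}). Taking $k=n-1$ forces $b_{n-1}\in G^{n}=0$, so $D(a_{n-1},0,c)=0$ and therefore $D(a,b,c)=0$ for all $a,b,c\in G$. This is exactly the right-distributive law, and Remark~\ref{rem1} then lets me conclude that $(G,+,\cdot)$ is a right, and therefore two-sided, brace.

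The only real subtlety is spotting the recurrence $D(a,b,c)=-D(a+b,\,a*b,\,c)$: it isolates the extra term $a*b$ produced by the expansion $a\cdot b=a+b+a*b$ and, crucially, pushes the second argument one step deeper into the chain $G^{k+1}$ at every iteration, so that the hypothesis $G^{n}=0$ terminates the process after at most $n-1$ steps. Once this identity is in hand, the rest is essentially bookkeeping.
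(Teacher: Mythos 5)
Your proof is correct, but it takes a genuinely different route from the paper's. The paper disposes of the lemma by citing an external result, \cite[Lemma~15]{Agatasmok}, which expresses the right-distributivity defect $(a+b)*c-a*c-b*c$ as a signed sum of $2n+1$ associativity defects $(d_i*d_i')*c-d_i*(d_i'*c)$; associativity of $*$ kills every summand and Remark~\ref{rem1} finishes. You instead derive everything from scratch: the recurrence $D(a,b,c)=-D(a+b,\,a*b,\,c)$ does follow from (\ref{rumpdef}) once $a*(b*c)$ is replaced by $(a*b)*c$ and the left-hand side is read as $\bigl((a+b)+(a*b)\bigr)*c$; the membership $b_k\in G^{k+1}$ is immediate from (\ref{Rumpchain2}); and the termination step $D(u,0,c)=-(0*c)=0$ needs only the (easily checked) fact that $0*c=0$ in a left brace. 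What your approach buys is self-containedness --- no appeal to \cite{Agatasmok} --- together with a transparent reason why the iteration stops after $n-1$ steps, namely that the second argument is pushed one level deeper into the chain $G^{k+1}$ at each step; in effect you have reproved the relevant special case of the cited lemma, with the alternating sign in your iteration matching the $(-1)^{i+1}$ in the paper's formula. What the paper's route buys is brevity, and the cited lemma is stated without assuming associativity, so it isolates exactly which associativity defects obstruct right distributivity in general.
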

 \begin{proof}
 It follows from \cite[Lemma~15]{Agatasmok}  that for every
$a,b,c\in G$
 there are $d_{i}, d_{i}'\in G$ such that
\begin{equation}
\label{eqAgata}
(a+b)*c=a*c+b*c+\sum _{i=0}^{2n} (-1)^{i+1}((d_{i}*d_{i}')*c-d_{i}*(d_{i}'*c)).
\end{equation}
By hypothesis  $(G,*)$ is a semigroup, so $(d_{i}*d_{i}')*c-d_{i}*(d_{i}'*c)=0, \; 0 \leq i \leq 2n$, which together with
(\ref{eqAgata}) imply
$(a+b)*c=a*b+a*c$, for all $a,b,c \in G$.  Therefore, by Remark \ref{rem1} $(G, +, \cdot)$  is a two-sided brace.
 \end{proof}
\begin{proposition}
\label{Pro*} Let $(G,r)$ be a symmetric group of a finite
multipermutation level, $\mpl (G,r) = m$. Suppose $(G, *)$ is a
semigroup.  Then the following two conditions hold.
\begin{enumerate}
\item The left brace $(G, +, \cdot)$ is a two-sided brace, and hence $(G, +, *)$ is a Jacobson radical ring.
\item
The group $(G, \cdot)$ is nilpotent.
\end{enumerate}
\end{proposition}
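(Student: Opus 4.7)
The plan is to combine \lemref{lem*} and \lemref{LemmaPro*} with the standard characterization of finite multipermutation level in terms of Rump's ideal chain $G^{(n)}$. First I would invoke the fact (established by Rump and reformulated for symmetric groups in \cite{Tatyana}) that $\mpl(G,r) = m < \infty$ is equivalent to $G^{(m+1)} = \{0\}$. Since $(G,*)$ is a semigroup by hypothesis, \lemref{lem*} gives
\[
G^{m+1} \subseteq G^{(m+1)} = \{0\},
\]
so $G^{m+1} = 0$. Applying \lemref{LemmaPro*} then delivers part (1): $(G,+,\cdot)$ is a two-sided brace, equivalently $(G,+,*)$ is a Jacobson radical ring.

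For part (2), I would use the identity
\[
a\cdot b = a + b + a*b, \quad \forall\, a,b\in G,
\]
which follows at once from $a\cdot b = a + {}^ab$ and ${}^ab = a*b + b$, to identify the multiplicative group $(G,\cdot)$ with the adjoint (circle) group of the associative ring $(G,+,*)$. The equality $G^{m+1}=0$ established above says precisely that this ring is nilpotent as an associative ring, and it is a classical fact that the adjoint group of a nilpotent associative ring is itself nilpotent, with nilpotency class bounded in terms of the nilpotency index of the ring. Hence $(G,\cdot)$ is nilpotent, completing part (2).

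The main obstacle is pinning down the precise correspondence between $\mpl(G,r) = m$ and the vanishing index of the Rump series $G^{(n)}$; although this equivalence is standard in the brace literature, one must carefully track the off-by-one conventions used in the definition of $G^{(n)}$ versus the retract sequence defining $\mpl$. Once that is in place, the remainder of the argument is a clean chain of implications through the two preceding lemmas together with the well-known fact that nilpotent radical rings have nilpotent adjoint groups.
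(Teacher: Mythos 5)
Your proof is correct and follows essentially the same route as the paper: part (1) is verbatim the paper's argument (the equivalence of $\mpl(G,r)=m<\infty$ with $G^{(m+1)}=0$ is exactly \cite[Proposition~6]{CGIS}, followed by Lemma~\ref{lem*} and Lemma~\ref{LemmaPro*}). For part (2) the paper simply cites \cite[Proposition~8]{Agatasmok}, whereas you unwind the same content directly --- once the brace is two-sided, $G^{m+1}=0$ makes the radical ring $(G,+,*)$ nilpotent and its adjoint group, which is $(G,\cdot)$ via $a\cdot b=a+b+a*b$, is then nilpotent by the classical central-series argument --- an equally valid and slightly more self-contained justification.
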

\begin{proof}
By hypothesis $(G,r)$ has a finite multipermutation level, $\mpl
(G,r) = m$, so \cite[Proposition~6]{CGIS} implies that
$G^{(m+1)}=0$ and
$G^{(m)}\neq 0$.
 It follows from Lemma~\ref{lem*} that  $G^{m+1} \subseteq
G^{(m+1)}=0$. Clearly, the hypothesis of  Lemma  \ref{LemmaPro*} is satisfied, so $(G, +, \cdot)$ is a two-sided brace. This proves part (1) of the proposition.
The nilpotency of the the group $(G,\cdot)$ follows from \cite[Proposition~8]{Agatasmok}.
\end{proof}
\begin{question} Under the hypothesis of Proposition \ref{Pro*},  can we find an upper bound $B(m)$, depending on $m$, so that $G$ has nilpotency class $\leq B(m)$?
\end{question}
\begin{proposition}
\label{Pro*2} Let $(G,r)$ be a finite symmetric group such that $(G,
*)$ is a semigroup.
The following conditions are equivalent.
\begin{enumerate}
\item
$(G,r)$ has a finite multipermutation level, $\mpl (G,r) = m$.
\item $(G,\cdot)$ is a
nilpotent group.
\item $(G, +, \cdot)$ is a two-sided brace.
\end{enumerate}
\end{proposition}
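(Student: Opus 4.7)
My plan is to close the equivalence by the circle $(1)\Rightarrow(2),(3)$ (already proved in Proposition~\ref{Pro*}), $(3)\Rightarrow(1)$, and $(2)\Rightarrow(3)$. Under the standing hypothesis $(G,*)$ is a semigroup, the two previous auxiliary results (Lemma~\ref{lem*} and Lemma~\ref{LemmaPro*}) do most of the structural work, so the main job is to pick up the two remaining implications.

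For $(3)\Rightarrow(1)$, I would observe that a finite two-sided brace $(G,+,\cdot)$ yields a finite Jacobson radical ring $(G,+,*)$. A classical theorem of ring theory states that every finite Jacobson radical ring is nilpotent, so there exists $n$ with $G^{n}=0$ in the ring sense. Because in a two-sided brace the operation $*$ is associative and two-sidedly distributive, an easy induction shows that the brace series $G^{n}$ (given by $G^{n+1}=G*G^{n}$) and $G^{(n)}$ (given by $G^{(n+1)}=G^{(n)}*G$) coincide; hence $G^{(n)}=0$. By \cite[Proposition~6]{CGIS} this is equivalent to $\mpl(G,r)<\infty$.

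For $(2)\Rightarrow(3)$, Lemma~\ref{LemmaPro*} reduces the task to producing some $n$ with $G^{n}=0$. Since $G$ is finite, the descending chain $G\supseteq G^{2}\supseteq G^{3}\supseteq\cdots$ of ideals stabilizes at some $I=G^{n}$ satisfying $G*I=I$; it therefore suffices to show $I=0$. The natural route is to exploit the nilpotency of $(G,\cdot)$ via its Sylow decomposition: in a finite brace the additive $p$-primary components agree with the Sylow $p$-subgroups of the multiplicative group, which (under the nilpotency hypothesis) are normal and fit into a decomposition of the brace into $p$-primary blocks. For every odd prime $p$ the additive group of the corresponding $p$-block has no elements of order two, and Proposition~\ref{Pr*} immediately gives that the $p$-block is two-sided, whence $G^{n}=0$ on that block.

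The principal obstacle is the $2$-primary case of $(2)\Rightarrow(3)$: possible additive $2$-torsion blocks the direct appeal to Proposition~\ref{Pr*}, so the argument has to be carried out by hand there. I would proceed by induction on the order of the $2$-block, using that the stable ideal $I$ is a proper sub-brace whenever $I\neq G$ and exploiting associativity of $*$ together with the upper central series of $(G,\cdot)$ to propagate vanishing down the central series; alternatively one can seek to invoke a converse of \cite[Proposition~8]{Agatasmok} that links nilpotency of the multiplicative group to nilpotency of the ring $(G,+,*)$ under the semigroup hypothesis.
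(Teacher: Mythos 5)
Your handling of $(1)\Rightarrow(2),(3)$ (via Proposition~\ref{Pro*}) and of $(3)\Rightarrow(1)$ (finite radical ring is nilpotent, the two series $G^n$ and $G^{(n)}$ coincide under associativity and two-sided distributivity, then \cite[Proposition~6]{CGIS}) is correct and matches the paper's logic, with $(3)\Rightarrow(1)$ spelled out in more detail than the paper's appeal to \cite{Ru07} and \cite{Tatyana}. The problem is $(2)\Rightarrow(3)$: there you have a genuine gap. Your reduction via Lemma~\ref{LemmaPro*} to producing $n$ with $G^{n}=0$ is the right first move, but the route you then propose does not get there. First, the decomposition of a finite left brace with nilpotent multiplicative group into $p$-primary blocks whose additive and multiplicative $p$-parts match is itself a nontrivial structural fact that you assert without justification, and at this stage of the argument you cannot assume the brace is two-sided, so the additive $p$-components are a priori only left ideals. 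Second, and more seriously, for the $2$-primary block you explicitly concede that Proposition~\ref{Pr*} is unavailable and offer only a list of things you ``would'' try --- induction on the order, the upper central series, or ``a converse of \cite[Proposition~8]{Agatasmok}.'' None of these is carried out, so the implication is not proved; the $2$-block is not an edge case but a full obstruction to your strategy (e.g.\ nothing you write rules out a $2$-block on which the stable ideal $I=G*I$ is nonzero).

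The paper closes this in one line precisely by invoking the result you were hoping exists: \cite[Theorem~1]{Agatasmok} states that a finite left brace whose multiplicative group is nilpotent satisfies $G^{n}=0$ for some $n$, with no hypothesis on additive $2$-torsion. Combined with Lemma~\ref{LemmaPro*} this gives $(2)\Rightarrow(3)$ immediately and makes the Sylow decomposition unnecessary. So the fix is not to patch your $2$-primary analysis but to replace the whole $(2)\Rightarrow(3)$ step by that citation.
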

\begin{proof}
The implications (1) $\Longrightarrow$  (2) and (1)
$\Longrightarrow$  (3) follow from Proposition~\ref{Pro*}. The
implication  (3) $\Longrightarrow$ (1) is well known, \cite{Ru07},
\cite{Tatyana}. (2) $\Longrightarrow$  (3). Assume the group $G$ is
nilpotent. Then \cite[Theorem~1]{Agatasmok} implies $G^n=0$ for some
positive integer
 $n$. It follows from Lemma  \ref{LemmaPro*} that $(G, +, \cdot)$ is a two-sided brace.
\end{proof}

Recall that $(X,r)$ is a multipermutation solution if and only if
the corresponding symmetric group  $(\Gcal, r_{\Gcal})$ has a finite
multipermutation level, \cite[Theorem~5.15]{Tatyana}.

\begin{corollary}
\label{CorPr**}
Let $(X,r)$ be a multipermutation solution, $\Gcal=\Gcal(X,r)$.
Then $(\Gcal, *)$ is a semigroup if and only if the left brace  $(\Gcal, +, \cdot)$ is a two-sided brace.
\end{corollary}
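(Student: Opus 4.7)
The plan is to reduce the corollary to Proposition~\ref{Pro*} applied to the symmetric group $(\Gcal,r_{\Gcal})$, using the translation between the multipermutation property of $(X,r)$ and the finite multipermutation level of $(\Gcal,r_{\Gcal})$.

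First I would dispose of the easy direction. If the left brace $(\Gcal,+,\cdot)$ is a two-sided brace, then by the characterization recalled in \secref{Preliminaries}, $(\Gcal,+,*)$ is a Jacobson radical ring, in particular $*$ is associative, so $(\Gcal,*)$ is a semigroup. No use is made here of the multipermutation hypothesis on $(X,r)$.

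For the converse, suppose $(\Gcal,*)$ is a semigroup. The hypothesis that $(X,r)$ is a multipermutation solution is exactly what is needed to guarantee, via \cite[Theorem~5.15]{Tatyana} (stated explicitly just before the corollary), that the associated symmetric group $(\Gcal,r_{\Gcal})$ has a finite multipermutation level $\mpl(\Gcal,r_{\Gcal})=m<\infty$. At this point both hypotheses of Proposition~\ref{Pro*} are satisfied for $(\Gcal,r_{\Gcal})$, and applying it directly yields that $(\Gcal,+,\cdot)$ is a two-sided brace, completing the proof.

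There is no real obstacle: the corollary is essentially a repackaging of Proposition~\ref{Pro*} under the substitution $G\leadsto \Gcal$, combined with the known equivalence (multipermutation solution $(X,r)$) $\Longleftrightarrow$ (finite $\mpl(\Gcal,r_{\Gcal})$). The only thing one must be careful about is to invoke Proposition~\ref{Pro*} rather than Proposition~\ref{Pro*2}: Proposition~\ref{Pro*2} assumes $\Gcal$ is \emph{finite}, whereas here $X$ (and hence $\Gcal$) need not be finite, so it is Proposition~\ref{Pro*}, which only needs finite multipermutation level, that does the job.
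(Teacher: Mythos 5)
Your proof is correct and follows exactly the route the paper intends: the corollary is stated immediately after the recalled equivalence from \cite[Theorem~5.15]{Tatyana} and after Proposition~\ref{Pro*}, precisely so that it follows by combining the two as you do (the converse direction being the trivial associativity of $*$ in a Jacobson radical ring). Your observation that Proposition~\ref{Pro*} (finite multipermutation level) rather than Proposition~\ref{Pro*2} (which assumes $\Gcal$ finite) is the right tool is also accurate.
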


\begin{remark}
\label{Prop*}
Let $(G,r)$ be a symmetric group, and  let $(G, +, \cdot)$ be the corresponding left brace.
Suppose  that
$(G, +, * )$ is  a Jacobson radical ring generated by a finite set
$X= \{x_1, \cdots, x_n\}\subseteq G$.
If $(G, *)$ satisfies the identity
\[x*u*x = 0, \forall x \in X, \;  \; u \in G, (u = e \; \text{is possible}) \]
then the left brace $G$ is nilpotent of nilpotency class $\leq n+1.$
Moreover, $(G, r)$ has finite multipermutation level, $\mpl (G,r)
\leq n$.
\end{remark}
\begin{proof}
By assumption $(G, +, *)$ is a Jacobson radical ring.
 Therefore any element  from $ G^{(k)}, k \geq 1$,  can be written as a sum of elements $w$ of the form
$w = y_1*y_2*\cdots *y_s,  \; \; y_j \in X \bigcup \{e \}, \; 1\leq j \leq s, \; s \geq k.$ But $|X|=n$, hence   every such  element  $w \in G^{(n+1)}$
 has a subword  $x *a*x,$ where $x \in X,  a \in G$, or has the shape $w = u*e*v,\;  u, v \in G,$  so in each case $w= 0$ . Hence  $G^{(n+1)}= 0$,
and therefore, by \cite[Proposition~6]{CGIS}, $\mpl(G,r)\leq n$.
\end{proof}

\section{Symmetric sets $(X,r)$ whose associated groups and braces have special
properties}

It was proven in \cite[Theorem~8.2]{Tatyana},  that for a nontrivial
square-free solution $(X, r)$, with $G=G(X,r)$ one has $\mpl (X,r)
=\mpl (G,r_G) = 2$ if and only if $(G, r_G)$ satisfies condition
\textbf{lri}. We generalize  this result  for  arbitrary solutions
$(X, r)$.

\begin{theorem}
 \label{Thm_main}
 Let $(X,r)$ be a solution of arbitrary cardinality, $G=G(X,r)$, $(G, r_G)$, $(G, +, \cdot)$ in usual notation.
 The following conditions are equivalent.

 \begin{enumerate}
 \item
 \label{Thm_main1}
 $(G,r_G)$ is a non-trivial solution with condition \textbf{lri}.
 \item
  \label{Thm_main2}
 $(G,r_G)$ is a multipermutation solution of level $2$.
\item
    \label{Thm_main3}
  $G$ acts (nontrivially) upon itself  as automorphisms that is
\[
\Lcal_{(a^b)}= \Lcal_a, \quad  \forall\; a,b \in G,\; \text{and} \; \Lcal_a \neq id_G,\; \text{for some} \; a \in G.\]
  \item
   \label{Thm_main4}
   $(X,r)$ is a non-trivial solution with \textbf{lri} and the brace $(G, +, \cdot)$ satisfies
   \textbf{Raut}.
  \end{enumerate}
  Each of these conditions imply $\mpl (X,r) \leq 2.$
  \end{theorem}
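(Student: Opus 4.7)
The plan is to prove the equivalence via a cyclic chain of implications $(1)\Rightarrow (2)\Rightarrow (3)\Rightarrow (4)\Rightarrow (1)$; the bound $\mpl(X,r)\le 2$ then follows from $(2)$ since $(X,r)\hookrightarrow (G,r_G)$ as a sub-braided-set and multipermutation level is monotone under such embeddings.

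For $(1)\Rightarrow(2)$, I would use that $\mpl(G,r_G)=2$ is equivalent to the retract $\Ret(G,r_G)$ being a trivial symmetric set, i.e.\ $\Lcal_{{}^ab}=\Lcal_b$ and $\Lcal_{a^b}=\Lcal_a$ for all $a,b\in G$, together with non-triviality. Under \textbf{lri} one has $\Rcal_a=\Lcal_{a^{-1}}$, hence $a^b={}^{b^{-1}}a$, so both retract conditions reduce (after relabeling) to the single identity $\Lcal_{{}^xy}=\Lcal_y$. I would verify this by combining the factorization $\Lcal_a\Lcal_b=\Lcal_{{}^ab}\Lcal_{a^b}$ (obtained from \textbf{ML1} and \textbf{M3}) with the symmetric-set identities ${}^{{}^xy}(x^y)=x$ and $({}^xy)^{x^y}=y$ (consequences of $r_G^2=\id$), and repeatedly invoking \textbf{lri} to rewrite right actions as left actions.

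The step $(2)\Rightarrow(3)$ is immediate, since the retract-triviality condition $\Lcal_{a^b}=\Lcal_a$ is precisely $(3)$, and non-triviality is inherited. For $(3)\Rightarrow(4)$, condition (3) combined with \textbf{ML2} gives ${}^a(bc)=({}^ab)({}^{a^b}c)=({}^ab)({}^ac)$, so each $\Lcal_a$ is an automorphism of the multiplicative group $(G,\cdot)$. Coupling this with the symmetric identity $({}^xy)^{x^y}=y$ and the relation $\Lcal_{x^y}=\Lcal_x$ from (3) forces $\Rcal_x=\Lcal_x^{-1}=\Lcal_{x^{-1}}$ on the generating set, which is \textbf{lri} for $(X,r)$. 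Then \textbf{Raut} for the brace follows from \textbf{lri} together with \textbf{Laut} (automatic in every left brace): $(a+b)^c=\Lcal_{c^{-1}}(a+b)=\Lcal_{c^{-1}}(a)+\Lcal_{c^{-1}}(b)=a^c+b^c$.

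Finally, for $(4)\Rightarrow(1)$ I would propagate \textbf{lri} from $X$ to all of $G$ using that $\Lcal:G\to\Sym(G)$ is a group homomorphism and $\Rcal:G\to\Sym(G)$ is a group antihomomorphism (by \textbf{ML1} and \textbf{MR1}), so the relation $\Rcal_x=\Lcal_{x^{-1}}$ on generators extends multiplicatively to their products and inverses. I expect this extension step to be the main obstacle: although closure on generators is essentially formal, verifying the functional identity $\Rcal_a=\Lcal_{a^{-1}}$ at pairs in $G\times G$ that do not lie in $X\times X$ requires translating the additive structure of the brace into the braiding, and it is precisely \textbf{Raut} that provides the bridge between $+$ and the right action needed to make this transfer work.
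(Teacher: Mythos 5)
Your overall chain $(1)\Rightarrow(2)\Rightarrow(3)\Rightarrow(4)\Rightarrow(1)$ is a reasonable organization, and three of its four legs correspond to the citation-level steps of the paper: $(2)\Leftrightarrow(3)\Rightarrow(1)$ is \cite[Proposition 7.13]{Tatyana}, $(1)\Leftrightarrow(4)$ is \cite[Corollary 7.11]{Tatyana}, and your idea for $(4)\Rightarrow(1)$ --- propagating \textbf{lri} from $X$ to $G$ by induction on word length, with \textbf{Raut} as the bridge between $+$ and the right action --- is exactly the mechanism of Theorem~\ref{Thm_braces2_lemma}. The deduction of $\mpl(X,r)\le 2$ from $\mpl(G,r_G)=2$ is also fine.

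The genuine gap is in $(1)\Rightarrow(2)$, which is the actual new content of the theorem. You propose to deduce retract triviality from the factorization $\Lcal_a\Lcal_b=\Lcal_{{}^ab}\Lcal_{a^b}$, the involutivity identities, and repeated use of \textbf{lri}. But all of these are identities valid in \emph{every} symmetric group satisfying \textbf{lri}, and at that level of generality the implication is false: by Theorem~\ref{ThmLRI} a two-sided brace satisfies \textbf{lri} iff $a*c*a=0$ identically, and the free $F$-algebra on three generators $x,y,z$ modulo the ideal generated by all elements $a*c*a$ together with all words of length $\ge 4$ is a radical ring in which $x*y*z\neq 0$, hence $G^{(3)}\neq 0$ and $\mpl=3$ while \textbf{lri} holds (this is also why the corollary following Theorem~\ref{ThmLRI} only bounds $\mpl$ by the number of generators). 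So no purely formal manipulation of the braided-group axioms can close this step; one must use what is special about the structure group $G(X,r)$. The paper's proof does exactly that: it evaluates both sides of $(xy^{-1})^{z^{-1}}={}^z(xy^{-1})$ for generators $x,y,z\in X$, rewrites everything additively via (\ref{eqoperation1}), and then uses that $(G,+)$ is a \emph{free abelian group with basis} $X$ to compare the resulting terms and conclude ${}^{(z^y)}x={}^zx$; only afterwards does it extend this identity to all of $G$ by induction on length and invoke \cite[Lemma~7.12, Proposition~7.13]{Tatyana}. Your sketch contains no step playing the role of this freeness argument, and without it the implication cannot be established.
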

  \begin{proof}
\cite[Proposition 7.13]{Tatyana} gives the implications
(\ref{Thm_main2}) $\Longleftrightarrow$ (\ref{Thm_main3})
$\Longrightarrow$ (\ref{Thm_main1}). The equivalence
(\ref{Thm_main1}) $\Longleftrightarrow$ (\ref{Thm_main4}) follows
from  \cite[Corollary 7.11]{Tatyana}.

(\ref{Thm_main1}) $ \Longrightarrow$ (\ref{Thm_main2}). Assume that
$(G, r_G)$ is a nontrivial solution which satisfies \textbf{lri}. We
shall show that $\Lcal_{({}^az)}= \Lcal_{z}$ for all $z \in X, a \in
G.$

By \cite[Proposition~2.25]{GIM08}, $G$ satisfies the cyclic
conditions. We use successively  {\bf ML0}, {\bf ML2}, {\bf lri} and {\bf cl2} to obtain
$$1={^{a}}(b^{-1}b)={^{a}}(b^{-1}){^{(a^{b^{-1}})}}b={^{a}}(b^{-1}){^{(^ba)}}b={^{a}}(b^{-1}){^{a}}b,$$
for all $a,b\in G$. Thus
\begin{equation}\label{invers}
^{a}(b^{-1})=({^{a}}b)^{-1}, \; \forall a,b\in G.
\end{equation}
 Let $x,y,z\in X$. Then condition {\bf lri} implies
\begin{eqnarray}\label{*}
(xy^{-1})^{z^{-1}}={^z}(xy^{-1}).
\end{eqnarray}
Note that $y^{-1}= -\left({}^{y^{-1}}y\right)=-\left(y^y\right)$. We now
compute each side of (\ref{*}).
For the left-hand side we obtain
\begin{eqnarray*}
(xy^{-1})^{z^{-1}}&=&x^{({^{(y^{-1})}}(z^{-1}))}(y^{-1})^{z^{-1}} \quad (\mbox{by \textbf{MR2} })\\
&=&{^{(z^{y})}}x(y^{-1})^{z^{-1}}\quad (\mbox{by {\bf lri} and (\ref{invers})})\\
&=&\left({^{(z^y)}}x\right)\left({}^{z}(-(y^{y}))\right)\\
&=& \left({}^{(z^y)}x\right)+{}^{\left({}^{(z^y)}x\right)}\left({}^{z}(-(y^{y}))\right) \quad (\mbox{by (\ref{eqoperation1})})\\
&=&
\left({}^{(z^y)}x\right)-{}^{\left({}^{(z^y)}x\right)}\left({}^{z}(y^{y})\right).
\end{eqnarray*}
Our computation of the right-hand side gives
\begin{eqnarray*}
{^z}(xy^{-1})&=&\left({}^{z}x\right)\cdot\left({}^{(z^x)}(y^{-1})\right) \quad (\mbox{by \textbf{ML2} })\\
&=&\left({}^{z}x\right)+{}^{\left({}^{z}x\right)}\left({}^{(z^x)}(-(y^{y}))\right)\quad (\mbox{by (\ref{eqoperation1})})\\
&=&\left({}^{z}x\right)-{}^{({}^{z}x)\cdot(z^x)}(y^{y})\\
&=&\left({}^{z}x\right)-{}^{(z\cdot x)}(y^{y}).
\end{eqnarray*}
Therefore the following equality holds in $G$
\begin{equation}\label{eq**}
\left({}^{(z^y)}x\right)-{}^{\left({}^{(z^y)}x\right)}\left({}^{z}(y^{y})\right) = \left({}^{z}x\right)-{}^{(z\cdot x)}(y^{y}).
\end{equation}
Note that $(G,+)$ is
a free abelian group with a basis $X$, and $\left({}^{(z^y)}x\right)$,
${}^{\left({}^{(z^y)}x\right)}\left({}^{z}(y^{y})\right)$,
$\left({}^{z}x\right)$, ${}^{(z\cdot x)}(y^{y})\in X$.  Hence the equality (\ref{eq**}) implies that either
\begin{equation}
\label{eqlri1}
{}^{(z^y)}x={}^{z}x,
\end{equation}
or
\begin{equation}
\label{eqlri1a}
{}^{(z^y)}x    \neq  {}^{z}x,
\quad\mbox{and}\quad \left({}^{z}x\right)-{}^{(z\cdot x)}(y^{y})= 0.
\end{equation}
We claim that (\ref{eqlri1a}) is impossible. Indeed, $0 = 1$ in $G$,
hence  ${^z}(xy^{-1}) = \left({}^{z}x\right)-{}^{(z\cdot x)}(y^{y})=
0$ implies ${^z}(xy^{-1})= 1$, which by \textbf{ML0} gives $xy^{-1}=
1$, and therefore $x=y$. Now the cyclic condition implies
$\left({}^{(z^y)}x\right)=\left({}^{(z^x)}x\right) =
\left({}^{z}x\right)$, which contradicts  (\ref{eqlri1a}). It
follows then that ${}^{({}^{y}z)}x= {}^{z}x$, for all $x,y,z \in X.$
This, together with \textbf{lri} and (\ref{invers}), imply
"enforced" cyclic conditions
\begin{equation}
\label{eqlri5}
  \begin{array}{lll}
  &{}^{(z^y)}x= {}^{z}x, \quad & {}^{({}^{y}z)}x= {}^{z}x \\
  &  x^{({}^{y}z)} = x^z,   \quad  & x^{(z^y)} = x^z,
  \end{array}
\end{equation}
for all $x, y, z \in X^{\star}=X\cup X^{-1}$, where  $X^{-1}=\{
x^{-1}\mid x \in X \}.$

We use induction on the length $|a|$ of $a \in G$ to show that
\begin{equation}
\label{eqlri6} {}^{({}^{y}z)}a= {}^{z}a, \quad   {}^{(z^y)}a=
{}^{z}a, \quad \forall a \in G,\;\forall y,z \in X^{\star}.
\end{equation}
The base for induction follows from (\ref{eqlri5}). Assume (\ref{eqlri6}) is in force for all $a\in G$ with $1 \leq |a|\leq k.$
Suppose $a \in G, 2 \leq |a|= k+1$, then $a = tb, t \in X^{\star}, b \in G, |b|= k.$
We use \textbf{ML2} and the inductive hypothesis (IH) to yield:
\[
  \begin{array}{llll}
  {}^{({}^{y}z)}a &=&  {}^{({}^{y}z)}{(tb)} = ({}^{({}^{y}z)}{(t)})({}^{(({}^{y}z)^t)}{(b)})&\\
                  &=&  ({}^{({}^{y}z)}{(t)})({}^{(({}^{y}z))}{(b)}) & \text{by IH}\\
                  &=&  ({}^zt)({}^zb) & \text{by IH} \\
   {}^za          &=&  {}^z{(tb)} = ({}^z{(t)})({}^{(z^t)}{(b)})&\\
                  &=&  ({}^zt)({}^zb) & \text{by IH}.
  \end{array}
\]
This implies the first equality in (\ref{eqlri6}) for all $a\in G, y, z \in X^{\star}.$ Using \textbf{lri} one deduces that the second equality in (\ref{eqlri6}) is also in force.
Similar technique "extends" (\ref{eqlri6}) on the whole group $G$, so that the following equalities hold:
\[
{}^{({}^{b}c)}a= {}^{c}a, \quad   {}^{(b^c)}a= {}^ca \quad \forall a, b, c \in G.
\]

It follows from   \cite[Lemma~7.12]{Tatyana} that the symmetric
group $(G, r_G)$ satisfies the four equivalent conditions.
\begin{equation}
  \label{MLauteq2}
  \begin{array}{lllll}
  &\text{\textbf{(i)}} \quad &\Lcal_{({}^ba)}=\Lcal_a, \; \forall\; a,b \in G;\quad
  &\text{\textbf{(ii)}} \quad     &\Lcal_{(a^b)}=\Lcal_a,   \; \forall\; a,b\in G;\\
 &\text{\textbf{(iii)}} \quad     &\Rcal_{({}^ba)}=\Rcal_a,\;  \forall\; a,b  \in G;\quad
 &\text{\textbf{(iv)}}       &\Rcal_{(a^b)}=\Rcal_a,\;     \forall\; a,b \in G.
  \end{array}
  \end{equation}
By  \cite[Proposition~7.13]{Tatyana} each of the conditions (i)
through (iv) is equivalent to (\ref{Thm_main2}). We have shown the
implication (\ref{Thm_main1}) $\Longrightarrow$ (\ref{Thm_main2}),
so $\mpl (G,r_G) = 2.$ By \cite[ Theorem~5.15]{Tatyana}, one has
$\mpl (G,r_G) -1 \leq \mpl (X,r) \leq \mpl (G,r_G)$, and therefore  $\mpl
(X,r) \leq 2.$
\end{proof}

Suppose $(X,r)$ is a solution with \textbf{lri}, and  $(G,r_G)$ is its associated symmetric group.
Let $(\overline{G}, r_{\overline{G}})$ be a symmetric group, and assume there is a braiding-preserving
map (homomorphism of solutions)
\[\mu: X \longrightarrow \overline{G} \quad x \mapsto \overline{x} \in \overline{G}\]
Then by \cite[Theorem~9]{LYZ}, the map $\mu$ extends canonically to
a braiding preserving group homomorphism (that is a homomorphism of
symmetric groups)
\[\mu: (G,r_G) \longrightarrow (\overline{G}, r_{\overline{G}})  \quad a \mapsto \overline{a} \in \overline{G}.\]
Moreover, if $\overline{X}= \mu (X)$ is a set of (multiplicative) generators of $\overline{G}$ then $ \mu: G \longrightarrow
\overline{G}$
is an epimorphism of symmetric groups.

The following result is a generalization of
\cite[Theorem~7.10(2)]{Tatyana}.

 \begin{theorem}
 \label{Thm_braces2_lemma}
 Let $(X,r)$ be a symmetric set with \textbf{lri} (not necessarily finite),
 let $(\overline{G}, \overline{r})$ be a symmetric group, and let $(\overline{G}, +, .)$ be the associated left brace.
 Assume there is a braiding-preserving
map (homomorphism of solutions)
\[\mu: X \longrightarrow \overline{G}, \quad x \mapsto \overline{x} \in \overline{G},\]
such hat the image   $\mu (X) = \overline{X}$, is an $\overline{r}$-invariant subset of $(\overline{G}, \overline{r})$ and
generates the (multiplicative) group $\overline{G}$.
The following conditions are equivalent on $\overline{G}$.
 \begin{enumerate}
 \item  The left brace $(\overline{G}, +, \cdot)$  satisfies condition
     \textbf{Raut}.
 \item $(\overline{G},r_{\overline{G}})$ satisfies condition \textbf{lri}.
\end{enumerate}
\end{theorem}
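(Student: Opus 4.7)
The implication $(2) \Rightarrow (1)$ is a general fact about left braces, recorded in \cite[Corollary~7.11]{Tatyana}: condition \textbf{lri} always forces \textbf{Raut}. The substantive direction is $(1) \Rightarrow (2)$, which I would prove by propagating \textbf{lri} from the generating set $\overline{X}$ to all of $\overline{G}$. First, because $\mu\colon (X,r) \to (\overline{X},\overline{r}|_{\overline{X}\times\overline{X}})$ is a braiding-preserving surjection and $(X,r)$ satisfies \textbf{lri}, the identities $({}^{\overline{x}}\overline{y})^{\overline{x}}=\overline{y}$ and ${}^{\overline{x}}(\overline{y}^{\overline{x}})=\overline{y}$ hold for all $\overline{x},\overline{y}\in\overline{X}$.

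For each fixed $\overline{x}\in\overline{X}$, introduce the maps $\phi_{\overline{x}}(a):=({}^{\overline{x}}a)^{\overline{x}}$ and $\psi_{\overline{x}}(a):={}^{\overline{x}}(a^{\overline{x}})$ on $\overline{G}$. Using \textbf{Laut} (which holds in every left brace) together with the hypothesis \textbf{Raut}, one computes
\[
\phi_{\overline{x}}(a+b)=\bigl({}^{\overline{x}}a+{}^{\overline{x}}b\bigr)^{\overline{x}}=({}^{\overline{x}}a)^{\overline{x}}+({}^{\overline{x}}b)^{\overline{x}}=\phi_{\overline{x}}(a)+\phi_{\overline{x}}(b),
\]
and similarly for $\psi_{\overline{x}}$. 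Thus $\phi_{\overline{x}}$ and $\psi_{\overline{x}}$ are endomorphisms of the additive abelian group $(\overline{G},+)$ that fix every element of $\overline{X}$.

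The key remaining step is to show that $\overline{X}$ additively generates $\overline{G}$. Here one exploits the $\overline{r}$-invariance of $\overline{X}$ together with the brace identity $ab=a+{}^{a}b$: any product $\overline{x}_{1}\cdots\overline{x}_{n}$ of generators unfolds inductively into a sum of elements of $\overline{X}$, since each step keeps sums inside $\overline{X}$ by invariance and \textbf{Laut}. Inverses are handled by noting that $\overline{x}\cdot\overline{x}^{-1}=0$ gives $\overline{x}^{-1}=-\,{}^{\overline{x}^{-1}}\overline{x}$, and \textbf{lri} on $\overline{X}$ forces $\mathcal{L}_{\overline{x}}|_{\overline{X}}$ to be a bijection of $\overline{X}$ with inverse $\mathcal{L}_{\overline{x}^{-1}}|_{\overline{X}}=\mathcal{R}_{\overline{x}}|_{\overline{X}}$, so ${}^{\overline{x}^{-1}}\overline{x}\in\overline{X}$. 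Consequently the additive homomorphisms $\phi_{\overline{x}},\psi_{\overline{x}}$ agree with the identity on the additive generating set $\overline{X}$, and therefore on all of $\overline{G}$.

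To conclude, the set $\{a\in\overline{G}:({}^{a}c)^{a}=c={}^{a}(c^{a})\ \forall c\in\overline{G}\}$ is closed under multiplication and inversion by \textbf{ML1} and \textbf{MR1}, contains $\overline{X}$, hence equals $\overline{G}$. This gives \textbf{lri} on $\overline{G}$. I expect the main obstacle to be the additive-generation step: it is precisely here that the $\overline{r}$-invariance hypothesis on $\overline{X}$ is indispensable, and without it neither the unfolding of products nor the treatment of inverses would stay inside the additive span of the generators.
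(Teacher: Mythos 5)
Your proof is correct, and although its computational core coincides with the paper's, the packaging is genuinely different. The paper proves $(1)\Rightarrow(2)$ by a double induction on the minimal word length of preimages in the structure group $G(X,r)$: first it fixes $\overline{a}\in\overline{X^{\star}}$ and inducts on $|u|$ to get $\overline{u}^{\overline{a}}={}^{\overline{a}^{-1}}\overline{u}$ (the inductive step writes $\overline{w}=\overline{x}+{}^{\overline{x}}\overline{u}$ and applies \textbf{Raut} then \textbf{Laut} --- exactly your additivity computation for $\phi_{\overline{x}},\psi_{\overline{x}}$), and then inducts on the length of the acting element using \textbf{ML1}/\textbf{MR1} --- exactly your closure argument for the set $S$. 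What you do differently is to isolate the structural fact that $\overline{X}$ additively generates $(\overline{G},+)$, which the paper never states, and to replace the first length induction by the observation that an additive endomorphism agreeing with the identity on an additive generating set is the identity. This buys a cleaner argument that avoids choosing minimal-length preimages, and it makes transparent exactly where $\overline{r}$-invariance and \textbf{lri} on $X$ enter: they are needed to guarantee ${}^{\overline{x}^{\pm 1}}\overline{y}\in\overline{X}$, so that the unfolding $ab=a+{}^{a}b$ and the identity $\overline{x}^{-1}=-\,{}^{\overline{x}^{-1}}\overline{x}$ stay inside the additive span. The paper instead outsources the treatment of inverse generators to the condition \textbf{lri}$\star$ of \cite[Proposition~7.6]{Tatyana}. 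The one point where your write-up is terse is the unfolding of a general word in which inverse generators are interleaved with generators: there you need ${}^{\overline{x}^{-1}}$ (not only ${}^{\overline{x}}$) to preserve $\overline{X}$, but your bijectivity remark ($\mathcal{L}_{\overline{x}^{-1}}|_{\overline{X}}=\mathcal{R}_{\overline{x}}|_{\overline{X}}$, which maps $\overline{X}$ into $\overline{X}$ by invariance) does supply this, so the gap is only expository; note also that this step uses only \textbf{lri} on $X$ and invariance, not \textbf{Raut}, so there is no circularity.
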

\begin{proof}
(1) $\Longrightarrow $ (2). Suppose  $(X,r)$ satisfies \textbf{lri} and $\overline{G}$
satisfies  condition \textbf{Raut}.

Recall that $X^{\star}= \{x\mid x \in X \;\text{or}\; x^{-1} \in X
\}$. By  \cite[Proposition~7.6]{Tatyana}, condition \textbf{lri} on
$(X,r)$ extends to
 \label{VIPLemma}\begin{equation}
  \label{lri*eq}
 {\rm\bf lri\star:} \quad  {}^a{(x^a)}= x =({}^ax)^a,\quad \forall\;x \in
 X^{\star},  \; a \in G.
 \end{equation}
Denote by $\overline{X^{\star}}= \mu (X^{\star})$ the image of $X^{\star}$ in $\overline{G}$. (It is possible that $\overline{X^{\star}}$
contains the unit
$1= 1_{\overline{G}}$ of the group $\overline{G}$).

We shall extend \textbf{lri} on the symmetric group  $(\overline{G},r_{\overline{G}})$ in two steps.
\textbf{1.}  We show that
\begin{equation}
\label{lri7}
{}^{(\overline{a})^{-1}}\overline{u} = \overline{u}^{\overline{a}} \quad \text{for all} \; \overline{a} \in \overline{X^{\star}}, \;
\overline{u} \in \overline{G}.
\end{equation}

For $\overline{u} \in \overline{G}$ we consider $u \in G$ of minimal length, such that $\mu(u) = \overline{u}$.
 Without loss of generality we may assume that $\overline{u} \neq 1$ (this follows from \textbf{ML0} and \textbf{MR0}).
 We use induction on the minimal length $|u|$ of  $u$, with $\mu(u) = \overline{u}$.
Condition ${\rm{\bf lri\star}}$, (\ref{lri*eq}) gives the base for
induction. Assume (\ref{lri7}) holds for all $\overline{a} \in
\overline{X^{\star}}$ and all $\overline{u} \in \overline{G}$, where
$\overline{u} = \mu(u), |u|\leq n$. Let $\overline{a} \in
\overline{X^{\star}}$  and suppose $\overline{w} \in \overline{G}$,
where $\overline{w} = \mu(w), |w| = n+1$. A reduced form of $w$ can
be written as $w = xu$, where $x \in X^{\star}$,  $u \in G, \;|u| =
n $. We present $\overline{w}^{\overline{a}}$ as
\[
 \overline{w}^{\overline{a}}=(\overline{xu})^{\overline{a}}=((\overline{x})(\overline{u}))^{\overline{a}}
 =(\overline{x}+{}^{\overline{x}}{(\overline{u})})^{\overline{a}},
\]
and consider the following equalities in $G$:
\begin{equation}
\label{lri9}
\begin{array}{llll}
 \overline{w}^{\overline{a}}&=& (\overline{x}+{}^{\overline{x}}{(\overline{u})})^{\overline{a}}&\\
   &=& (\overline{x})^{\overline{a}}+({}^{\overline{x}}{(\overline{u})})^{\overline{a}}
   \quad & \text{by \textbf{Raut}} \\
   &=& {}^{(\overline{a})^{-1}}{(\overline{x})} + {}^{({\overline{a})}^{-1}}{({}^{\overline{x}}{(\overline{u})})}&\text{by IH}\\
   &=& {}^{(\overline{a})^{-1}}{(\overline{x} + {}^{\overline{x}}{(\overline{u})})}&\text{by \textbf{Laut}}\\
   &=& {}^{{(\overline{a})}^{-1}}{(\overline{x}\cdot\overline{u})} &\\
   &=& {}^{{(\overline{a})}^{-1}}{(\overline{w})}, &
 \end{array}
\end{equation}
where IH is the inductive assumption.
This verifies (\ref{lri7})
for all $\overline{a} \in \overline{X^{\star}}$,  and all $\overline{u} \in \overline{G}$.
Clearly, (\ref{lri7})  is equivalent to
\begin{equation}
\label{lri10} {}^{\overline{a}} {(\overline{u}^{\overline{a}})}=
\overline{u} \quad \forall \; \overline{a}, \overline{u}, \;
\text{where}\;  \overline{a} \in \overline{X^{\star}},\;
\overline{u} \in \overline{G}.
\end{equation}
\textbf{2.} We  shall extend (\ref{lri10}) for all $\overline{a}\in
\overline{G}.$ We  use induction again, this time on the minimal
length of the elements $a \in G$ with  $\mu(a) = \overline{a}$. The
base of the induction is given by (\ref{lri10}). Assume
${}^{\overline{a}}{(\overline{u}^{\overline{a}})}= \overline{u}$ for
all $\overline{a}, \overline{u} \in \overline{G},$ where there is an
$a \in G$, such that $\mu(a) = \overline{a}$, and  $|a|\leq n.$ Let
$\overline{a}, \overline{u} \in \overline{G},$ and assume the
minimal length of the $a$'s with $\mu(a)= \overline{a}$ is $|a|=
n+1$.  Then $a = bx, \; x \in X^{\star}, b \in G, |b|= n$. The
following equalities hold:
\begin{equation}
\label{lri12}
\begin{array}{llll}
{}^{\overline{a}} {({\overline{u}}^{\overline{a}})} &=& {}^{\overline{bx}} {({\overline{u}}^{\overline{bx}})} \quad &\\
&=& {}^{\overline{b}}
{({}^{\overline{x}}{({(\overline{u}^{\overline{b}})}^{\overline{x}})})}
            \quad &\\
&=& {}^{\overline{b}}{(\overline{u}^{\overline{b}})}  & \text{by IH} \\
&=& \overline{u} &\text{by IH}.
 \end{array}
\end{equation}
 This verifies
 \[
{}^{\overline{a}}{(\overline{u}^{\overline{a}})}
= \overline{u},\quad \forall \; \overline{a}, \; \overline{u}, \in \overline{G}.
\]
The remaining identity \[({}^{\overline{a}}{\overline{u}})^{\overline{a}}= \overline{u},\quad \forall \; \overline{a},\; \overline{u}, \in
\overline{G}\]
is straightforward.
We have shown that the symmetric group $(\overline{G}, \overline{r})$
satisfies condition \textbf{lri}.

(2) $\Longrightarrow$ (1). It follows from \cite[Theorem~7.10]{Tatyana}
 that  condition \textbf{lri} on an arbitrary symmetric group implies \textbf{Raut} on the
corresponding left brace.
\end{proof}

\begin{corollary}
 \label{Thm_braces2}
 Let $(X,r)$ be a symmetric set with \textbf{lri} (not necessarily finite), notation as usual.
The symmetric group $(\Gcal, r_{\Gcal})$ satisfies condition
\textbf{lri} if and only if the associated left brace
 $(\Gcal, +,\cdot )$ satisfies condition
     \textbf{Raut}.
\end{corollary}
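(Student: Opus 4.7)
The plan is to deduce this corollary as a direct application of Theorem~\ref{Thm_braces2_lemma} with $\overline{G}=\Gcal$. What I need to produce is a braiding-preserving map $\mu\colon X\longrightarrow\Gcal$ whose image $\overline{X}=\mu(X)$ is $r_\Gcal$-invariant and generates $\Gcal$ multiplicatively; once this is in hand the biconditional follows verbatim from Theorem~\ref{Thm_braces2_lemma}.

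The natural choice is the composition $X\hookrightarrow G(X,r)\stackrel{\Lcal}{\longrightarrow}\Gcal(X,r)$, that is $\mu\colon x\mapsto\Lcal_x$. By the construction recalled in Section~\ref{Preliminaries}, the epimorphism $\Lcal\colon G(X,r)\longrightarrow\Gcal(X,r)$ is braiding-preserving, so its restriction $\mu$ to $X$ is a homomorphism of solutions from $(X,r)$ into $(\Gcal,r_\Gcal)$. By definition $\Gcal$ is generated as a group by $\{\Lcal_x\mid x\in X\}=\mu(X)$, which gives the generation hypothesis.

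For $r_\Gcal$-invariance of $\overline{X}$: using that $\mu$ is braiding-preserving one has
\[
r_\Gcal(\mu(x),\mu(y))=(\mu({}^xy),\mu(x^y))\in\mu(X)\times\mu(X),
\]
because ${}^xy,x^y\in X$. Thus $\overline{X}$ is $r_\Gcal$-invariant, and all hypotheses of Theorem~\ref{Thm_braces2_lemma} are met.

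The only point that requires any thought is the first one — verifying that the canonical map $X\to\Gcal$ indeed satisfies the three assumptions of Theorem~\ref{Thm_braces2_lemma} — but each of these is either a definition or an immediate consequence of $\mu$ being braiding-preserving, so there is no substantive obstacle. Applying Theorem~\ref{Thm_braces2_lemma} directly then yields the equivalence between condition \textbf{lri} on $(\Gcal,r_\Gcal)$ and condition \textbf{Raut} on the associated left brace $(\Gcal,+,\cdot)$, which is exactly the claim of the corollary.
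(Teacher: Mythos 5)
Your proposal is correct and follows essentially the same route as the paper: the paper's proof also takes $\mu=\Lcal$ restricted to $X$, notes it is a braiding-preserving homomorphism whose image generates $\Gcal$, and invokes Theorem~\ref{Thm_braces2_lemma}. Your explicit verification of the $r_{\Gcal}$-invariance of $\mu(X)$ is a small additional detail the paper leaves implicit, but the argument is the same.
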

\begin{proof}
 The map
\[\Lcal: (G, r_G) \longrightarrow  (\Gcal, r_{\Gcal}), \quad x\mapsto \Lcal_{x}, \]
 is a braiding preserving homomorphisms of symmetric groups, the image
 $\Lcal(X)$
 generates the permutation group
 $\Gcal$.
 So the hypothesis of Theorem~\ref{Thm_braces2_lemma}
 is satisfied for
 $\mu = \Lcal$, which implies the equivalence of \textbf{lri} and \textbf{Raut}  on $(\Gcal, r_{\Gcal})$.
\end{proof}

The next corollary follows from  Corollary~\ref{Thm_braces2}, and
\cite[Theorems~8.5 and~5.15]{Tatyana}.
\begin{corollary}
Suppose $(X,r)$ is a finite square-free solution, notation as usual.
If  the symmetric group $(\Gcal, r_{\Gcal})$ satisfies condition
 \textbf{Raut}, then $(X,r)$ is a multipermutation solution of level $m < |X|$, and
\[ \mpl (X,r) = \mpl (G, r_G) = m, \; \; \mpl(\Gcal, r_{\Gcal}) = m-1.  \]
\end{corollary}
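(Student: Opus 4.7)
The plan is to thread the hypothesis through three results: Corollary~\ref{Thm_braces2} above, and \cite[Theorems~8.5 and~5.15]{Tatyana}.

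First, I would verify that the solution $(X,r)$ itself satisfies \textbf{lri}, which is what lets Corollary~\ref{Thm_braces2} apply. The square-free assumption supplies the diagonal instances ${}^xx = x = x^x$, and the \textbf{Raut} identity on the brace $(\Gcal,+,\cdot)$, restricted to the $r_\Gcal$-invariant generating set $\{\Lcal_x \mid x\in X\}$ of $\Gcal$ (into which $X$ injects braiding-preservingly), propagates the diagonal case to off-diagonal pairs in $X$. With \textbf{lri} on $(X,r)$ in hand, Corollary~\ref{Thm_braces2} converts the \textbf{Raut} hypothesis on $(\Gcal,+,\cdot)$ into \textbf{lri} on the symmetric group $(\Gcal, r_\Gcal)$.

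At this point \cite[Theorem~8.5]{Tatyana}, which handles finite square-free solutions whose permutation symmetric group satisfies \textbf{lri}, yields that $(X,r)$ is a multipermutation solution of level $m < |X|$ and that $\mpl(X,r) = \mpl(G, r_G) = m$. Finally, the standard sandwich $\mpl(G, r_G) - 1 \le \mpl(\Gcal, r_\Gcal) \le \mpl(X,r)$ provided by \cite[Theorem~5.15]{Tatyana}, combined with the nontriviality of $(\Gcal, r_\Gcal)$ and the equality $\mpl(X,r) = \mpl(G, r_G) = m$, pins down $\mpl(\Gcal, r_\Gcal) = m-1$.

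The principal obstacle is the first step: pulling the brace-level identity \textbf{Raut} on $\Gcal$ back to the set-level identity \textbf{lri} on $X$. The square-free condition is essential here as the base case of the propagation; without it one would need \textbf{lri} on $(X,r)$ as a direct hypothesis, as in Corollary~\ref{Thm_braces2} and Theorem~\ref{Thm_braces2_lemma}. Once this descent is secured, the remaining steps are each a direct invocation of the quoted results.
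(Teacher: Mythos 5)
Your overall skeleton is the paper's: the result is obtained by chaining Corollary~\ref{Thm_braces2} with \cite[Theorems~8.5 and~5.15]{Tatyana}, and you correctly identify that the one nontrivial point is supplying the hypothesis of Corollary~\ref{Thm_braces2}, namely that $(X,r)$ itself satisfies \textbf{lri}. But your justification of that point is where the proof breaks down. The claim that the \textbf{Raut} identity on the brace $(\Gcal,+,\cdot)$ ``propagates the diagonal case ${}^xx=x=x^x$ to off-diagonal pairs in $X$'' is not an argument: \textbf{Raut} is the identity $(a+b)^c=a^c+b^c$ holding in the brace $\Gcal$, which acts on $\Gcal$ and not on $X$, and you give no mechanism (nor is one apparent) by which it could be ``restricted'' to yield the set-level identity $({}^xy)^x=y={}^x(y^x)$ on $X$. (Note also that $x\mapsto\Lcal_x$ need not be injective, so $X$ does not in general embed into $\Gcal$.) The correct, and much simpler, justification is that \emph{every} non-degenerate involutive square-free solution automatically satisfies the cyclic conditions and \textbf{lri}; this is a standard fact from \cite{GIM08} which uses only square-freeness, involutivity, non-degeneracy and the YBE, and the hypothesis \textbf{Raut} plays no role in it. As written, your first step is a genuine gap.

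A secondary inaccuracy: from your sandwich $\mpl(G,r_G)-1\le\mpl(\Gcal,r_\Gcal)\le\mpl(X,r)$ together with $\mpl(X,r)=\mpl(G,r_G)=m$ one only gets $m-1\le\mpl(\Gcal,r_\Gcal)\le m$, and ``nontriviality of $(\Gcal,r_\Gcal)$'' does not exclude the value $m$. What \cite[Theorem~5.15]{Tatyana} actually supplies (and what the paper uses, cf.\ the displayed chain in Corollary~\ref{CorPr*}) is the exact relation $\mpl(\Gcal,r_\Gcal)=\mpl(G,r_G)-1$, and it is this equality, not the inequalities, that pins down $\mpl(\Gcal,r_\Gcal)=m-1$.
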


\section{Conditions \textbf{lri} and \textbf{Raut} on symmetric groups with two-sided braces}

In this section we study symmetric groups $(G,r)$ whose associated braces $(G, +, .)$ are two-sided, or equivalently
$G_* = (G, +, *)$ are Jacobson radical rings. We present each of the conditions \textbf{lri} and \textbf{Raut} in terms of identities on the radical ring $G_*$.

We start with some useful results interpreting various conditions on a symmetric group $(G,r)$ in terms of the operation $*$

\begin{lemma}
\label{identities_Lemma*}
Let $(G,r)$ be a symmetric group. The the following conditions hold.
\begin{enumerate}
\item
$G$ satisfies the identity
\begin{equation}
\label{identity_eq01}
(a* c + c)* a^c  + a^c = a, \quad  \forall a, c \in G.
\end{equation}
\item Suppose the associated brace $(G, +, \cdot)$ is two-sided. Then the Jacobson radical ring $G_*= (G, +, *)$ satisfies the identities
 \begin{equation}
\label{identity_eq1}
a* c* a^c + c* a^c + a^c = a, \quad  \forall a, c \in G.
\end{equation}
\begin{equation}
\label{eqlri4}
{}^{({}^ac)}a =  a* c* a + {}^ca, \quad \forall a, c \in G.
\end{equation}
\end{enumerate}
\end{lemma}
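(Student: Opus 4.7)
The plan is to derive all three identities by unwinding the basic dictionary (\ref{eqoperation2}) relating the brace operation $*$ to the left action ${}^ba = b*a + a$, together with the involutivity of $r$ for part (1), and the ring axioms on $G_* = (G,+,*)$ for part (2). No auxiliary machinery is really needed beyond these two inputs.

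For part (1), I would apply the formula ${}^ba = b*a + a$ twice to the left-hand side of (\ref{identity_eq01}). First, $a*c + c = {}^ac$. Substituting, the left-hand side becomes $({}^ac)*(a^c) + a^c$, and a second application of the same formula collapses this to ${}^{({}^ac)}(a^c)$. Since $(G,r)$ is a symmetric group, $r^2 = \id$, which reads $r({}^ac,a^c) = (a,c)$, so ${}^{({}^ac)}(a^c) = a$ and the claim follows. Thus (\ref{identity_eq01}) is essentially just $r^2 = \id$ re-expressed in brace notation, and holds in every left brace.

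For part (2), I would invoke Remark~\ref{rem1} to note that when $(G,+,\cdot)$ is a two-sided brace, the operation $*$ is both left and right distributive over $+$, and by hypothesis it is also associative, so $G_*$ really is a Jacobson radical ring. The identity (\ref{identity_eq1}) then follows from (\ref{identity_eq01}) by expanding $(a*c + c)*a^c = a*c*a^c + c*a^c$ using right distributivity (and associativity to justify the unbracketed triple product $a*c*a^c$). For (\ref{eqlri4}), I would start from ${}^{({}^ac)}a = ({}^ac)*a + a$, substitute ${}^ac = a*c + c$, distribute on the right to obtain $(a*c)*a + c*a + a$, use associativity to rewrite $(a*c)*a$ as $a*c*a$, and finally recognize $c*a + a = {}^ca$ to conclude.

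No step presents a real obstacle; the arguments are purely symbolic rewrites. The only genuine content is the observation that (\ref{identity_eq01}) encodes $r^2 = \id$ in the language of $*$, and that the passage to the two-sided case is exactly what turns on the right distributivity required to split $(a*c+c)*a^c$ and $(a*c+c)*a$ into the forms appearing in (\ref{identity_eq1}) and (\ref{eqlri4}).
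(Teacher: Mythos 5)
Your proof is correct and follows essentially the same route as the paper: part (1) is the involutivity identity ${}^{{}^ac}(a^c)=a$ rewritten via ${}^ba=b*a+a$, and part (2) is the same expansion using right distributivity and associativity of $*$ in the two-sided case. The only cosmetic difference is that you reduce the left-hand side to $r^2=\id$ rather than starting from it.
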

\begin{proof}
(1) The map $r$ is involutive, which is equivalent to the following conditions on the actions
\begin{equation}
\label{involeq}
{}^{{}^ac}{(a^c)}= a, \quad ({}^ac)^{a^c} = c, \; \;\forall \; a,c \in G.
\end{equation}
We use  (\ref{eqoperation2}) to present the first equality in terms of the the operations $+, *$
and  yield
\[\begin{array}{llll}
a &=& {}^{{}^ac}{(a^c)}&\\
  &=& ({}^ac)* (a^c) + a^c &\\
  &=& (a* c +c )*(a^c) + a^c. &\\
   \end{array},
\]
so (\ref{identity_eq01}) holds.

(2) Suppose the associated brace is two-sided, let $G_*= (G, +, *)$. Clearly, the identities (\ref{identity_eq01}) and (\ref{identity_eq1}) are equivalent.
Let $a,c \in G$ then
\[{}^{({}^ac)}a =    ({}^ac)* a + a = (a* c+ c)* a+a = a* c* a + c* a + a  =a* c* a +
{}^ca, \]
which proves (\ref{eqlri4})
\end{proof}

\begin{proposition}
\label{cl1_Prop*}
Suppose $(G,r)$ is a symmetric group.
The following conditions are equivalent
\begin{enumerate}
\item    $G$ satisfies the identity
\begin{equation}
\label{id_cl1*}
(c^a)*a= c*a,  \; \;\forall \; a,c \in G.
\end{equation}
\item
 $(G,r)$ satisfy the cyclic condition \textbf{cl1}:
\begin{equation}
\label{id_cl1}
\text{\textbf{cl1}}:\quad {}^{c^a}a = {}^ca  \; \;\forall \; a,c \in G.
\end{equation}
\item
$(G,r)$ satisfies \textbf{lri}.
\item
$G$ satisfies all cyclic conditions,  see Definition \ref{lri&cl}.
\end{enumerate}
\end{proposition}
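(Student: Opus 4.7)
I would set up the cycle of implications (1) $\Leftrightarrow$ (2), then (2) $\Rightarrow$ (3) $\Rightarrow$ (4) $\Rightarrow$ (2), so that all four statements are equivalent.

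The equivalence (1) $\Leftrightarrow$ (2) is essentially a restatement. From ${}^uv = u*v+v$ in (\ref{eqoperation2}), we read ${}^{c^a}a = (c^a)*a+a$ and ${}^ca = c*a+a$, so after cancelling the common summand $a$ the two identities $(c^a)*a = c*a$ and ${}^{c^a}a = {}^ca$ say the same thing, for all $a,c\in G$.

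For the key step (2) $\Rightarrow$ (3) my plan is to exploit involutivity of $r$ in order to ``decode'' \textbf{cl1}. Fix $a,c\in G$ and set $u := {}^ac$, $v := a^c$, so $r(a,c)=(u,v)$. Since $r^2=\id$, we have $r(u,v)=(a,c)$, that is ${}^uv = a$ and $u^v = c$. Now apply \textbf{cl1} to the pair $(v,u)$ (substituting $x\to v$, $y\to u$ in ${}^{y^x}x = {}^yx$): the resulting identity ${}^{u^v}v = {}^uv$ reads ${}^c(a^c) = a$. As $a,c$ were arbitrary this gives $\Lcal_c\Rcal_c = \id_G$ for every $c\in G$. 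Because $\Lcal_c$ and $\Rcal_c$ are bijections of $G$, this forces $\Rcal_c = \Lcal_c^{-1}$, i.e.\ also $({}^ca)^c = a$; both identities together are precisely \textbf{lri}.

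For (3) $\Rightarrow$ (4), I would invoke \cite[Proposition~2.25]{GIM08} (the same reference the authors use in the proof of Theorem~\ref{Thm_main}), which says that in any non-degenerate symmetric set \textbf{lri} implies all four cyclic conditions \textbf{cl1}, \textbf{cl2}, \textbf{cr1}, \textbf{cr2}. The implication (4) $\Rightarrow$ (2) is trivial, since \textbf{cl1} is one of the four cyclic conditions listed in Definition~\ref{lri&cl}. The only substantive step is (2) $\Rightarrow$ (3), and its core is the combination of involutivity with a single re-labelling in \textbf{cl1}, so I do not anticipate any real obstacle; notably, none of the brace identities \textbf{Laut}, \textbf{Raut} or two-sidedness is needed, and the argument works for an arbitrary symmetric group without any finiteness or multipermutation assumption.
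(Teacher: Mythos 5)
Your proposal is correct and follows essentially the same route as the paper: the equivalence of (1) and (2) by cancelling $a$ in ${}^{c^a}a=(c^a)*a+a$ and ${}^ca=c*a+a$, the key step (2)~$\Rightarrow$~(3) via involutivity of $r$ combined with non-degeneracy and a relabelling in \textbf{cl1} (the paper produces $b$ with $c=b^a$ and derives \textbf{lri1} first, while you apply \textbf{cl1} to the pair $({}^ac,a^c)$ and derive \textbf{lri2} first -- a mirror image of the same argument), and an appeal to \cite{GIM08} to connect \textbf{lri} with the full set of cyclic conditions. The only cosmetic difference is that you close the loop as (2)~$\Rightarrow$~(3)~$\Rightarrow$~(4)~$\Rightarrow$~(2) whereas the paper proves (2)~$\Leftrightarrow$~(3) directly and cites \cite[Lemma~2.24]{GIM08} for (3)~$\Leftrightarrow$~(4); both are sound.
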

\begin{proof}
The equivalence (1) $\Longleftrightarrow$ (2) follows straightforwardly from the equalities
\begin{equation}
\label{cl1eq*}
{}^{c^a}a= (c^a)*a + a, \quad {}^ca = c*a + a,  \; a, c \in G
\end{equation}
(2)  $\Longrightarrow$ (3). Assume  \textbf{cl1} is in force. We shall verify the first and the second \textbf{lri}
equalities
\[ \textbf{lri1}:\quad
({}^ca)^c= a, \; \forall a, c \in G, \quad  \quad \textbf{lri2}:\quad {}^c{(a^c)}= a \; \;   \forall a, c \in G.\]

 Let $a, c \in G$.
By the non-degeneracy there exists  $b \in G$, with $c =b^a$. We use
(\ref{involeq}) and \textbf{cl1} to obtain $a = ({}^ba)^{b^a} =
({}^{b^a}a)^{b^a} = ({}^ca)^c$. This proves \textbf{lri1}. It
follows from the non-degeneracy again that there exists $d \in G$,
such that $a = {}^cd$. One has $ {}^c{(a^c)} = {}^c{(({}^cd)^c)}=
{}^cd = a, $ so the equality \textbf{lri2} is also in force.

(3)  $\Longrightarrow$  (2). Let $a,c \in G$. Then \textbf{lri1} and
(\ref{involeq})  imply $({}^{c^a}a)^{c^a}= a = ({}^ca)^{c^a}$. By
the non-degeneracy ${}^{c^a}a = {}^ca$, which proves \textbf{cl1}.
We have shown the equivalence of conditions (1), (2), and  (3). The
equivalence of (3) and (4) follows from \cite[Lemma~2.24]{GIM08}.
\end{proof}

\begin{theorem}\label{ThmLRI}
Let $(G,r)$ be a symmetric group with a two-sided  associated brace $(G, +,.)$, and let $G_*= (G, +, *)$ be the corresponding
 Jacobson radical ring. The following conditions are equivalent.
\begin{enumerate}
\item
\label{ThmLRI1}
$G_*$ satisfies the identity
\begin{equation}
\label{ThmLRI4}
a* c * a = 0, \quad \forall a, c \in G.
\end{equation}
\item
\label{ThmLRI1a}
$G_*$ satisfies the identity (\ref{id_cl1*}).
\item
\label{ThmLRI2}
The symmetric group $(G,r)$ satisfies conditions \textbf{lri}.
\item
\label{ThmLRI3} The symmetric group $(G,r)$ satisfies all cyclic
conditions.
\end{enumerate}
 \end{theorem}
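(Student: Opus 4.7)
The plan is to reduce the whole theorem to the cyclic condition \textbf{cl2} by means of identity~(\ref{eqlri4}), and then to invoke Proposition~\ref{cl1_Prop*} to collect the remaining equivalences.

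First, note that item~(\ref{ThmLRI1a}) of the theorem is verbatim item~(1) of Proposition~\ref{cl1_Prop*}. Consequently that proposition instantly yields
\[
(\ref{ThmLRI1a})\;\Longleftrightarrow\;(\ref{ThmLRI2})\;\Longleftrightarrow\;(\ref{ThmLRI3}),
\]
and moreover asserts that each of these three items is equivalent to the full list of cyclic conditions; in particular to
\[
\textbf{cl2}:\quad {}^{({}^ac)}a = {}^ca, \qquad\forall\, a,c\in G.
\]
This first reduction does not use the two-sided brace hypothesis at all.

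Second, I will show $(\ref{ThmLRI1})\Longleftrightarrow \textbf{cl2}$, which together with the previous step closes the loop of equivalences $(\ref{ThmLRI1})\Longleftrightarrow(\ref{ThmLRI1a})\Longleftrightarrow(\ref{ThmLRI2})\Longleftrightarrow(\ref{ThmLRI3})$. Here the two-sided brace hypothesis is what makes identity~(\ref{eqlri4}) of Lemma~\ref{identities_Lemma*}(2) available:
\[
{}^{({}^ac)}a \;=\; a*c*a + {}^ca, \qquad\forall\, a,c\in G,
\]
valid in the Jacobson radical ring $G_*$. Since $(G,+)$ is an abelian group, subtracting ${}^ca$ from both sides shows that the universal identity $a*c*a = 0$ holds in $G_*$ if and only if ${}^{({}^ac)}a={}^ca$ holds for all $a,c$, i.e.\ if and only if \textbf{cl2} holds.

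There is no real obstacle in the proof: Lemma~\ref{identities_Lemma*} and Proposition~\ref{cl1_Prop*} already carry out the essential work. The two-sided brace hypothesis is used only twice, in the same breath, to ensure that the associative expression $a*c*a$ makes unambiguous sense in $G_*$ and that identity~(\ref{eqlri4}) is genuinely available; once these are in hand, the equivalence between the ring-theoretic identity~(\ref{ThmLRI4}) and the geometric condition \textbf{cl2} is a single line of abelian-group cancellation.
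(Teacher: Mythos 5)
Your reduction of items (2)--(4) to Proposition~\ref{cl1_Prop*} and your use of identity~(\ref{eqlri4}) to obtain $(1)\Leftrightarrow\textbf{cl2}$ both match the paper. But the step that is supposed to ``close the loop'' contains a logical slip. Proposition~\ref{cl1_Prop*} makes items (2), (3), (4) equivalent to the \emph{conjunction} of all cyclic conditions; from $P\Leftrightarrow(\textbf{cl1}\wedge\textbf{cl2}\wedge\textbf{cr1}\wedge\textbf{cr2})$ you may conclude $P\Rightarrow\textbf{cl2}$, but not $\textbf{cl2}\Rightarrow P$ --- your phrase ``in particular to \textbf{cl2}'' silently upgrades the former to the latter. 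The individual cyclic condition that Proposition~\ref{cl1_Prop*} actually proves equivalent to (2)--(4) is \textbf{cl1} (identity (\ref{id_cl1*}) is $(c^a)*a=c*a$, which is \textbf{cl1} rewritten via (\ref{cl1eq*})), not \textbf{cl2}. Consequently your argument establishes $(4)\Rightarrow\textbf{cl2}\Leftrightarrow(1)$, hence $(2),(3),(4)\Rightarrow(1)$, but the converse implication $(1)\Rightarrow(2)$ is nowhere proved.

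That implication is exactly where the paper has to do extra work: assuming $a*c*a=0$ for all $a,c$, it takes identity (\ref{identity_eq1}), $a=a*c*a^c+c*a^c+a^c$, and multiplies on the right by $c$ to get
\[
a*c=(a*c*a^c+c*a^c+a^c)*c=a*(c*a^c*c)+c*a^c*c+a^c*c=a^c*c,
\]
since $c*a^c*c=0$ by (\ref{ThmLRI4}); this is precisely identity (\ref{id_cl1*}), i.e.\ item (2). To repair your proof you must either insert this computation (which amounts to showing $\textbf{cl2}\Rightarrow\textbf{cl1}$ inside a two-sided brace) or prove independently that \textbf{cl2} alone implies \textbf{lri} for symmetric groups; neither follows from Proposition~\ref{cl1_Prop*} as stated.
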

\begin{proof}
The equivalence of conditions (\ref{ThmLRI1a}), (\ref{ThmLRI2}), and
(\ref{ThmLRI3}) follows from Proposition~\ref{cl1_Prop*}. By Lemma
\ref{identities_Lemma*} $G$ satisfies the identity (\ref{eqlri4})
which implies the equivalence
\[
[ {}^{({}^ac)}a = {}^ca, \; \forall \; a, c \in G] \Longleftrightarrow [ a* c* a = 0,  \;  \forall \; a, c \in G].
\]
Now the implication
(\ref{ThmLRI3}) $\; \Longrightarrow $ (\ref{ThmLRI1}) is straightforward.
We shall prove (\ref{ThmLRI1}) $\; \Longrightarrow $ (\ref{ThmLRI1a}). Assume (\ref{ThmLRI4}) holds.
By Lemma \ref{identities_Lemma*} $G$ satisfies the identity
\[
a = a* c* a^c + c* a^c + a^c, \quad  \forall \; a, c \in G.
\]
Hence
\[
\begin{array}{lll}
a*c &=& (a* c* a^c + c* a^c + a^c)*c \\
    &=& a* (c* a^c *c) + c* a^c*c + a^c*c\\
    &=& a^c*c \quad \quad \quad\quad \quad \quad \text{by (\ref{ThmLRI4})},
\end{array}
\]
which proves (\ref{ThmLRI1a}).
We have verified the equivalence of conditions (1), (2), (3) and (4).
\end{proof}

\begin{corollary}
 Suppose  $(G, r)$ is a symmetric group of arbitrary cardinality, such that
\begin{itemize}
\item[(i)]
$(G, +, \cdot )$ is a
two-sided brace, so $G_*= (G, +, *)$ is the corresponding Jacobson radical ring;
\item[(ii)]   $G_*$  is finitely generated (as a ring)  by a set $X$
of $N$ generators (equivalently, the group $(G,\cdot)$ is finitely
generated); \item[(iii)]  $(G,r)$ satisfies \textbf{lri}.
\end{itemize}
Then  the following conditions hold.
\begin{enumerate}
\item  $a*G*a = 0$,  for every $a \in G.$ \item The ring $G_*$ is
nilpotent with level of nilpotency $\leq N+1$. \item $ (G, r)$ has
multipermutation level  $\mpl (G,r) \leq N$.
\end{enumerate}
\end{corollary}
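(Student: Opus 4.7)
The plan is to obtain all three conclusions directly from earlier results in the paper, with Theorem~\ref{ThmLRI} supplying the ring-theoretic identity that drives everything else.

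First, for part (1), hypotheses (i) and (iii) put us exactly in the setting of Theorem~\ref{ThmLRI}, so the equivalence between conditions (\ref{ThmLRI2}) and (\ref{ThmLRI1}) of that theorem gives the identity $a*c*a=0$ for all $a,c\in G$. Fixing an arbitrary $a$ and letting $c$ range over $G$ yields $a*G*a=\{0\}$, which is precisely (1).

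Next, for parts (2) and (3), I would apply Remark~\ref{Prop*} with $n=N$. The hypotheses of that remark are all in place: (i) supplies the Jacobson radical ring structure $G_*=(G,+,*)$; (ii) gives the finite ring generating set $X$ with $|X|=N$; and part (1), just proved, delivers exactly the input required, namely $x*u*x=0$ for every $x\in X$ and every $u\in G$ (with the case $u=e$ allowed trivially). The conclusion of Remark~\ref{Prop*} then produces (2) --- the brace $G$ is nilpotent of class $\leq N+1$ --- and (3) --- $\mpl(G,r)\leq N$ --- at the same time.

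I do not expect any substantive obstacle, since the argument is a straightforward chaining of two earlier results. The only point requiring any care is checking that the global identity $a*c*a=0$ from Theorem~\ref{ThmLRI} specializes correctly to the restricted form required by Remark~\ref{Prop*}; this is immediate from the inclusion $X\subseteq G$, so the real conceptual content of the corollary is already packaged in Theorem~\ref{ThmLRI} (translating \textbf{lri} into the ring identity $aca=0$) together with the combinatorial pigeonhole argument in Remark~\ref{Prop*}.
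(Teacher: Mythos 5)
Your proposal is correct and follows exactly the paper's own argument: Theorem~\ref{ThmLRI} yields the identity $a*c*a=0$ for part (1), and Remark~\ref{Prop*} is then invoked for parts (2) and (3). No further comment is needed.
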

\begin{proof}
By Theorem \ref{ThmLRI} condition \textbf{lri} on $(G,r)$ implies
the identity $a*b*a =0$, for all $a,b \in G,$ so (1) is in force.  Conditions
(2) and (3) follow straightforwardly from Remark~\ref{Prop*}.
\end{proof}

\begin{theorem}\label{ThmE}
Let $G= (G, r)$ be a symmetric group. Assume its associated left brace $(G, +, .)$ is a two-sided brace,
and let $G_* =(G, +, *)$ be the corresponding Jacobson radical ring.
\begin{enumerate}
\item
Let $a,b,c \in G,$  $u=u(a,b,c) =(a+b)c, \; w=w(a,b,c)=  ({}^{(a+b)}c)(a^{c}+b^{c})$.
Then there is an equality
\begin{equation}
\label{w}
 w = a* c* b^{c}+  b* c * a^{c}+ u.
\end{equation}
\item
$G$ satisfies condition \textbf{Raut} if and only if the following identity is in force
\begin{equation}
\label{RautJacobs1}
a* c * b^c + b* c * a^c=0,  \quad \forall a, b, c \in G.
\end{equation}
\end{enumerate}
\end{theorem}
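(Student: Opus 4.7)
The plan is to prove part~(1) by direct expansion in the two-sided brace, and then to derive part~(2) by combining (1) with the multiplicative factorization $xy = ({}^x y)(x^y)$ coming from condition \textbf{M3}.

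For part~(1), I rely on four facts: $xy = x + y + x*y$ (from the very definition of~$*$), ${}^x y = x*y + y$ (from~(\ref{eqoperation2})), associativity and bilateral distributivity of~$*$ over~$+$ (by Remark~\ref{rem1}, since $(G,+,\cdot)$ is two-sided), and the identity $a*c*a^c + c*a^c + a^c = a$ of Lemma~\ref{identities_Lemma*}(2). First I compute $u = (a+b)\cdot c = a+b+c+(a+b)*c = a+b+c+a*c+b*c$, using right distributivity. Next I rewrite ${}^{(a+b)}c = (a+b)*c + c = a*c+b*c+c$, so
\[
w = (a*c+b*c+c)\cdot(a^c+b^c) = (a*c+b*c+c) + (a^c+b^c) + (a*c+b*c+c)*(a^c+b^c).
\]
Bilateral distributivity and associativity expand the trailing $*$-product into six summands $a*c*a^c$, $a*c*b^c$, $b*c*a^c$, $b*c*b^c$, $c*a^c$, $c*b^c$. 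Applying Lemma~\ref{identities_Lemma*}(2) twice, the pair $a*c*a^c + c*a^c$ collapses to $a - a^c$, and the pair $b*c*b^c + c*b^c$ collapses to $b - b^c$; after the resulting $\pm(a^c+b^c)$ terms cancel, one arrives at $w = u + a*c*b^c + b*c*a^c$.

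For part~(2), applying $xy = ({}^x y)(x^y)$ with $x = a+b$ and $y = c$ yields $u = ({}^{(a+b)}c)\cdot((a+b)^c)$. Comparing with $w = ({}^{(a+b)}c)\cdot(a^c + b^c)$ and left-cancelling the common factor ${}^{(a+b)}c$ in $(G,\cdot)$ gives $u = w$ if and only if $(a+b)^c = a^c + b^c$, i.e.\ \textbf{Raut}. By part~(1) the equality $u = w$ is equivalent to $a*c*b^c + b*c*a^c = 0$, which yields the desired equivalence for all $a,b,c \in G$. The main obstacle will be the bookkeeping in part~(1): one must correctly identify which four of the six expanded summands are ``diagonal'' and pair them so that identity~(\ref{identity_eq1}) telescopes them into $a + b - a^c - b^c$, leaving exactly the two off-diagonal $*$-products of the theorem; once this is done, part~(2) is essentially immediate.
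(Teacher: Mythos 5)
Your proposal is correct and follows essentially the same route as the paper: the same expansion of $u$ and $w$ in $G_*$, the same six-term expansion of the $*$-product, the same double application of identity~(\ref{identity_eq1}) to collapse the diagonal terms, and the same use of \textbf{M3} plus left cancellation for part~(2). The only cosmetic difference is that you make the cancellation of the factor ${}^{(a+b)}c$ explicit, which the paper leaves implicit.
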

\begin{proof}

(1). We compute $u$ and $w$ as elements of the radical ring $G_*$.
One has $u=(a+b)c=(a+b)* c   +  a + b + c $, hence
\begin{equation}
\label{Rauteq3}
  u=a * c + b * c + a + b + c.
\end{equation}
Now we compute $w$:
\[\begin{array}{llll}
w&=&({}^{(a+b)}c)(a^{c}+b^{c})&\\
 &=&({}^{(a+b)}c)*(a^{c}+b^{c})+ {}^{(a+b)}c + a^{c}+b^{c}&\\
  &=&((a+b)* c + c)*(a^{c}+b^{c})+ ((a+b)* c + c) + a^{c}+b^{c}&\\
  &=&(a* c+ b* c + c)*(a^{c}+b^{c})+ ((a+b)* c + c) + a^{c}+b^{c}&\\
  &=&a* c* a^{c} +a* c* b^{c}+  b* c * a^{c}+ b* c * b^{c}+c * a^{c}+  c *
  b^{c}\\
  && + a* c +b* c + c + a^{c}+b^{c}&\\
  &=&[a* c* a^{c}+c * a^{c}+ a^{c}] + [b* c * b^{c}+ c *
  b^{c}+b^{c}]\\
  &&+a* c* b^{c}+  b* c * a^{c}
   + a* c +b* c + c &\\
 &=&a + b +a* c* b^{c}+  b* c * a^{c}+ a* c +b* c + c \quad\text{(we have applied (\ref{identity_eq1}) twice)}&\\
 &=&(a* c* b^{c})+  (b* c * a^{c})+ (a* c +b* c +a +b + c) &\\
 &=&(a* c* b^{c})+  (b* c * a^{c})+ u \quad \quad \quad \quad \text{(by (\ref{Rauteq3}))},&\\
   \end{array}
\]
which proves (1).

(2).  Note that condition \textbf{Raut} holds in $G$  \emph{iff}
\begin{equation}
\label{RautJacobs1a}
 (a+b)c =({}^{(a+b)}c)((a+b)^{c})=({}^{(a+b)}c)(a^{c}+b^{c}), \;  \forall a, b, c \in G.
 \end{equation}
In other words (in notation as above) condition \textbf{Raut} in $G$ is equivalent to
\[u(a,b,c)= w(a,b,c), \;\forall a,b,c \in G.\] This together with (\ref{w}) implies that
$G$ satisfies \textbf{Raut} if and only if the identity (\ref{RautJacobs1}) is in force.
\end{proof}

\section{Graded Jacobson radical rings $(G, +, *)$, their braces and symmetric groups}
In this section we consider graded Jacobson radical rings $R =
(R,+,*)$.

\begin{convention}
 \label{convRadring-twosidedbrace}
 To each Jacobson radical ring $R = (R, +, *)$, by convention we associate canonically a symmetric group $(R, r)$ and a  two-sided brace $(R, +, \cdot)$
 with operations and actions satisfying
\begin{equation}
\label{eqgeneral}
\begin{array}{l}
a\cdot b = a* b +a +b, \\
{ }^ab = a*b + b  = a\cdot b -a,\quad a^b = {}^{({}^ab)^{-1}}a,\\
\quad \quad  \forall \; a,b \in R.
\end{array}
\end{equation}
 Conversely, if $(G,r)$ is a symmetric group whose left brace $(G, +, \cdot)$ is a two-sided brace,
by convention we associate to $G$ the corresponding Jacobson radical ring $G_* = (G, +, *)$.
\end{convention}

By a graded ring we shall mean a ring graded by the additive
semigroup of positive integers.
 Thus  a graded
Jacobson radical ring  $R=(R, +, *)$ is presented as
\[ R=\oplus_{i=1}^{\infty} R_{i}, \; \text{where}\; R_i*R_j \subseteq R_{i+j}, \;  0 \in R_j, \; i, j\geq 1. \]
As usual, each element $a\in R_j, a \neq 0,$ is called \emph{a
homogeneous element of} \emph{degree $j$}, by convention the zero
element $0$ has degree $0$.

For consistency with our notation
 the operation multiplication in $R$ is denoted by $*$ (the ring $R$ does not have unit element with respect to the operation $*$).

\begin{proposition}
\label{PropGraded_mpl}
Let $(G,r)$ be a symmetric group, such that the associated left brace $(G, +, \cdot)$ is two-sided.
Suppose the associated Jacobson radical ring $G_*= (G, +, *)$ is graded: $G_*=\oplus_{i=1}^{\infty} G_{i}$, and is generated as a ring by the
set $V\subseteq G_{1}$.
Then $\mpl (G, r) = m$ if and only if $G_m \neq 0$ and $G_{i}=0, \; \forall \;  i \geq m+1$.
\end{proposition}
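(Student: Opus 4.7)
The plan is to identify both Rump chains $G^{(n)}$ and $G^n$ with the tail sums $\bigoplus_{i\ge n}G_i$ of the grading, and then invoke the characterization $\mpl(G,r)=m \Longleftrightarrow G^{(m+1)}=0$ and $G^{(m)}\ne 0$ from \cite[Proposition~6]{CGIS} (already used in the proof of Proposition~\ref{Pro*}).

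First I would observe that since $(G,+,\cdot)$ is a two-sided brace, $G_{*}$ is an associative ring, so the operation $*$ is associative. By a straightforward induction on $n$, both $G^{n}$ and $G^{(n)}$ coincide with the additive subgroup of $G_{*}$ spanned by all $n$-fold $*$-products $g_1*g_2*\cdots*g_n$ of elements of $G$ (with any bracketing). Lemma~\ref{lem*} already gives the inclusion $G^n\subseteq G^{(n)}$; the reverse inclusion follows from the analogous induction based on $G^{(n+1)}=G^{(n)}*G$ together with associativity. Hence $G^n=G^{(n)}$ for every $n\ge 1$.

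Next I would use the generation hypothesis $V\subseteq G_1$ and $G_i*G_j\subseteq G_{i+j}$ to prove $G^n=\bigoplus_{i\ge n}G_i$. Since $V$ generates $G_*$ as a ring and every element of $V$ is homogeneous of degree $1$, each homogeneous component $G_k$ is additively spanned by $k$-fold $*$-products of elements of $V$. On one hand, decomposing factors into their homogeneous pieces shows that any $n$-fold $*$-product of elements of $G$ is a sum of $*$-products whose degrees add to at least $n$, giving $G^n\subseteq\bigoplus_{i\ge n}G_i$. Conversely, any generator $v_1*\cdots*v_k$ of $G_k$ with $k\ge n$ lies in $G^k\subseteq G^n$, so $G_k\subseteq G^n$ whenever $k\ge n$, yielding the reverse inclusion.

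Combining the two identifications, $\mpl(G,r)=m$ is equivalent to $\bigoplus_{i\ge m+1}G_i=0$ together with $\bigoplus_{i\ge m}G_i\ne 0$, which by the directness of the sum is exactly the condition $G_i=0$ for all $i\ge m+1$ and $G_m\ne 0$. The only step requiring real care is the identification $G^n=\bigoplus_{i\ge n}G_i$, which uses essentially that the chosen generators sit in degree exactly one; everything else is routine application of associativity and the already available lemmas.
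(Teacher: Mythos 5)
Your proof is correct and follows essentially the same route as the paper: identify the Rump ideals $G^{(k)}$ with the tail sums $\bigoplus_{i\ge k}G_i$ (which is where the hypothesis $V\subseteq G_1$ enters) and then apply the characterization $\mpl(G,r)=m\Leftrightarrow G^{(m+1)}=0,\ G^{(m)}\neq 0$ from \cite[Proposition~6]{CGIS}. The paper simply asserts the equality $G^{(k)}=\oplus_{i\ge k}G_i$ without the detour through $G^n$, so your extra step $G^n=G^{(n)}$ is harmless but not needed.
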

\begin{proof}
Consider the chain of ideals  $G^{(1)} = G, \; G^{(n+1)} =G^{(n)}*G,
n \geq 1 $, see (\ref{Rideals1}). One has  $G_i \subseteq G^{(k)}$,
for all $i \geq k$,  moreover
\begin{equation}
\label{PropGraded_eq1}
G^{(k)} = \oplus_{i\geq k} G_{i}, \; \forall \; k \geq 1.
\end{equation}
By \cite[Proposition~6]{CGIS}, the symmetric group $(G,r)$ has
finite multipermutation level $\mpl(G,r)=m<\infty $ if and only if
$G^{(m+1)}=0$ and $G^{(m)}\neq 0$. This together with
(\ref{PropGraded_eq1}) imply that $\mpl(G,r)=m$ if and only if $G_m
\neq 0,$ and $G_i = 0,$ for all $i \geq m+1.$
\end{proof}

\begin{remark}
\label{remark_E} Let $R$ be a graded Jacobson radical ring. Suppose
$a, b, c \in R$ are nonzero elements, and $a$ is a homogeneous
element of degree $i$, that is $a\in R_i$. Then it is clear that
\begin{equation}
\label{RautJacobs_graded06}
\begin{array}{llll}
(i) \quad {}^{b}{a} = a+ \tilde{a}, \quad \text{where}\; \;\widetilde{a}= b*a \in \oplus_{j>i} R_{j};\\
(ii) \quad a^c = a + \tilde{a}, \quad \text{where}\; \;\tilde{a} = (({}^{a}{c})^{-1})*a \in \oplus_{j>i} R_{j}
\end{array}
\end{equation}
 \end{remark}

\begin{lemma}
\label{lemmaPropE} Let $R_*= (R, +, *)$ be a graded Jacobson
radical ring, $R_*=\oplus_{i=1}^{\infty} R_{i}$. Let $(R, +,\cdot)$
be the associated two-sided brace, and let $(R, r)$ be the
associated symmetric group.  Suppose the brace $R$ satisfies
condition \textbf{Raut}.
\begin{enumerate}
\item
The following equality holds for homogeneous elements of $R$:
\begin{equation}
\label{RautJacobs_graded1a}
a_i* c_j* b_k+ b_k* c_j *a_i=0, \quad\forall \; a_i \in R_i,\; c_j \in R_j,\; b_k  \in R_k, \; i,j,k \geq 1.
\end{equation}
\item Moreover, if the additive group $(R,+)$ has no elements of order two, then
\begin{equation}
\label{RautJacobs_graded2a}
a_i* c_j* a_i=0, \quad \forall \; a_i \in R_i, \; c_j \in R_j,  i,j \geq 1.
\end{equation}
\end{enumerate}
\end{lemma}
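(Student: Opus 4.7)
The plan is to deduce both parts from the identity furnished by Theorem~\ref{ThmE}(2), namely
\[ a*c*b^c + b*c*a^c = 0, \quad \forall a,b,c \in R, \]
by extracting the lowest-degree homogeneous component on each side. The main input is Remark~\ref{remark_E}, which says that for a homogeneous element $x$ of degree $d$, the conjugate $x^c$ differs from $x$ only by a correction term lying in $\oplus_{l>d}R_l$. So the multiplication $*$, being graded of degree-addition type, will push all ``correction'' contributions strictly above degree $i+j+k$, leaving the two terms we want in the lowest slice.

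More precisely, for part (1), I would fix homogeneous elements $a_i\in R_i$, $c_j\in R_j$, $b_k\in R_k$ and apply Remark~\ref{remark_E}(ii) to write
\[ a_i^{c_j} = a_i + \alpha, \qquad b_k^{c_j} = b_k + \beta, \]
with $\alpha\in\oplus_{l>i}R_l$ and $\beta\in\oplus_{l>k}R_l$. Substituting into the Raut-identity from Theorem~\ref{ThmE}(2) applied to $(a_i,b_k,c_j)$ and expanding by left/right distributivity of $*$ over $+$ (which is valid in the Jacobson radical ring $R_*$) gives
\[ a_i*c_j*b_k + b_k*c_j*a_i + a_i*c_j*\beta + b_k*c_j*\alpha = 0. \]
The first two summands are homogeneous of degree $i+j+k$, while $a_i*c_j*\beta\in\oplus_{l>i+j+k}R_l$ and $b_k*c_j*\alpha\in\oplus_{l>i+j+k}R_l$. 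Since $R_*$ is graded, the degree-$(i+j+k)$ component of this equation must vanish on its own, which yields exactly~\eqref{RautJacobs_graded1a}.

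Part (2) is then an immediate specialization: take $b_k=a_i$ (so $k=i$) in \eqref{RautJacobs_graded1a} to obtain $2\,(a_i*c_j*a_i)=0$ in the abelian group $(R,+)$; under the hypothesis that $(R,+)$ has no element of order two, this forces $a_i*c_j*a_i=0$, which is~\eqref{RautJacobs_graded2a}. The only step that requires care is bookkeeping the degrees to confirm that all ``$\tilde{a}$''- and ``$\tilde{b}$''-contributions really land strictly above degree $i+j+k$, so that projecting onto the degree-$(i+j+k)$ component isolates the desired identity; this is a direct consequence of the grading $R_p*R_q\subseteq R_{p+q}$ together with Remark~\ref{remark_E}. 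I do not anticipate a genuine obstacle beyond this routine grading argument.
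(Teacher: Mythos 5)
Your proposal is correct and follows essentially the same route as the paper: both invoke the identity $a*c*b^c + b*c*a^c = 0$ from Theorem~\ref{ThmE}(2), use Remark~\ref{remark_E} to write $a^c = a + \tilde a$ and $b^c = b + \tilde b$ with corrections in strictly higher degree, and then use the grading to isolate the degree-$(i+j+k)$ component; part (2) is the same specialization $b_k = a_i$ combined with the absence of $2$-torsion. No substantive difference.
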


\begin{proof}
(1)  Let  $a \in R_i  , c \in R_j, b  \in R_k, \;  i,j,k \geq 1$ be
non-zero elements (we omit the indices of $a,b,c$ for simplicity of
notation). Consider he equalities
\[\begin{array}{llll}
0 &=&a* c * (b^c) + b* c * (a^c) &\text{by Theorem \ref{ThmE}}\\
  &=&a* c * [b + \tilde{b}]  + b* c * [a + \tilde{a}] &\text{see Remark \ref{remark_E}}\\
  &=&[a* c * b +b* c * a] + [a* c * \tilde{b} +b* c * \tilde{a}] = f + g, &\\
  \end{array}
\]
where $f= a* c * b +b* c * a$, and $g =a* c * \tilde{b} +b* c * \tilde{a}$.
 Clearly, $f \in R_{i+j+k}$, and $g\in \oplus_{m\geq i+j+k+1} R_{m}$, see  Remark \ref{remark_E}.
 But $R$ is a graded ring, hence the equality $f + g=0$ holds \emph{iff} $f = 0$ and $g = 0.$
This proves (\ref{RautJacobs_graded1a}).

(2) Assume now that $(R,+)$ has no elements of order two, and let
 $a= a_i \in R_i , c= c_j \in R_j, i, j \geq 1$.
 Then we set $k=i$, $b_k = a$ in (\ref{RautJacobs_graded1a}) and obtain
 \[a_i*c_j*a_i + a_i*c_j*a_i= 0,\]
which  implies the desired equality $a_i*c_j*a_i= 0$, $\forall i, j
\geq 1.$
\end{proof}

\begin{theorem}
\label{PropE} Let $R_*=(R, +, *)$ be a graded Jacobson radical ring,
$R_*=\oplus_{i=1}^{\infty} R_{i}$. Let $(R, +,\cdot)$, and  $(R,
r)$, respectively,  be the associated two-sided brace and the
corresponding symmetric group. Suppose the additive group $(R,+)$
has no elements of order two. The following two conditions are
equivalent.
\begin{enumerate}
\item
The brace $(R, +,\cdot)$ satisfies condition \textbf{Raut}.
\item
The symmetric group $(R, r)$ satisfies condition \textbf{lri}.
\end{enumerate}
\end{theorem}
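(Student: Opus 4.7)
The implication $(2)\Longrightarrow(1)$ is already known and requires no grading: by \cite[Theorem~7.10]{Tatyana} (quoted in the proof of Theorem~\ref{Thm_braces2_lemma}), condition \textbf{lri} on any symmetric group forces \textbf{Raut} on the associated left brace. So the content lies in the converse $(1)\Longrightarrow(2)$, and the plan is to push \textbf{Raut} down to the identity $a*c*a=0$, which by Theorem~\ref{ThmLRI} is equivalent to \textbf{lri} under the standing two-sided brace assumption.

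The first step is to record what the graded setting already gives us. Assuming \textbf{Raut}, Lemma~\ref{lemmaPropE}(1) produces the antisymmetry
\[
a_i*c_j*b_k+b_k*c_j*a_i=0\qquad (a_i\in R_i,\; c_j\in R_j,\; b_k\in R_k),
\]
and, crucially, the hypothesis that $(R,+)$ has no element of order two lets Lemma~\ref{lemmaPropE}(2) specialise this to $a_i*c_j*a_i=0$ for each homogeneous $a_i$, $c_j$.

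The second step is to promote these homogeneous identities to all of $R$. Since $R=\bigoplus_{i\ge 1}R_i$, every element is a finite sum of homogeneous components; write $a=\sum_i a_i$ and $c=\sum_j c_j$ with $a_i\in R_i$ and $c_j\in R_j$. Using the (two-sided) distributivity of $*$ over $+$ in the Jacobson radical ring $R_*$, expand
\[
a*c*a=\sum_{i,j,k} a_i*c_j*a_k
=\sum_{j}\Bigl(\sum_{i} a_i*c_j*a_i+\sum_{i<k}(a_i*c_j*a_k+a_k*c_j*a_i)\Bigr).
\]
The inner diagonal sum vanishes by Lemma~\ref{lemmaPropE}(2), while each off-diagonal pair vanishes by Lemma~\ref{lemmaPropE}(1). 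Hence $a*c*a=0$ for all $a,c\in R$, and Theorem~\ref{ThmLRI} then yields condition \textbf{lri} for $(R,r)$.

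There is no real obstacle beyond bookkeeping: the only subtlety is that the homogeneous identities coming from \textbf{Raut} in Lemma~\ref{lemmaPropE} are written for nonzero homogeneous elements, but both identities are trivially true when any entry is zero, so the expansion above remains valid. The no-$2$-torsion hypothesis is used exactly once, to secure the diagonal vanishing $a_i*c_j*a_i=0$ via Lemma~\ref{lemmaPropE}(2); without it only the antisymmetric part is available and the diagonal terms need not cancel.
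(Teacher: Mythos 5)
Your proposal is correct and follows essentially the same route as the paper: the converse is delegated to \cite[Theorem~7.10]{Tatyana}, and the forward direction decomposes $a$ and $c$ into homogeneous components, applies both parts of Lemma~\ref{lemmaPropE} to kill the diagonal and off-diagonal terms of $a*c*a$, and concludes via Theorem~\ref{ThmLRI}. No substantive differences to report.
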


\begin{proof}
(1) $\Longrightarrow $ (2). Assume the brace $(R, +,\cdot)$ satisfies \textbf{Raut}.
We shall prove that $a*c*a =0, \forall a, c \in R$.
Suppose $a,c \in R$ and present each of them as a finite sums of homogeneous components.  So $a= \sum_{i=1}^{N}a_i$,
$c= \sum_{j=1}^{N}c_j$,
 where $a_i,  c_i \in R_i, i \geq 1,$ and there are natural numbers $N_a, N_c,$ such that $a_i = 0$ for all $i \geq N_a,$
 $c_j = 0$ for all $j \geq N_c.$ Set $N= \max (N_a, N_c)$.

Lemma \ref{lemmaPropE} implies the following equalities
\begin{equation}
\label{RautJacobs_graded3a}
 a_i*c_j*a_k +  a_k*c_j*a_i = 0\quad \mbox{ and }\quad
a_i*c_j*a_i= 0,
\end{equation}
for all  $i,j,k$ with $1\leq i,j, k \leq N$. Then, by
(\ref{RautJacobs_graded3a}),  one has
 \begin{eqnarray*}
 a* c * a &=&(\sum_{i=1}^{N}a_i)*(\sum_{j=1}^{N}c_j)*(\sum_{k=1}^{N}a_k)\\
          &=& \sum_{j=1}^{N}\sum_{1 \leq i < k \leq N}(a_i*c_j*a_k +  a_k*c_j*a_i) + \sum_{j=1}^{N}\sum_{i=1}^{N}(a_i*c_j*a_i) \\
          &=& 0
  \end{eqnarray*}
 We have shown that $ a* c * a = 0, \forall a, c \in R,$ which by
Theorem \ref{ThmLRI} implies condition \textbf{lri}.

The implication   (2) $\Longrightarrow$ (1)  follows from
\cite[Theorem~7.10]{Tatyana}.
\end{proof}

\section{Constructions and  examples}

 It is not difficult to construct a Jacobson radical ring $R= (R, +, *)$ with $y*x*y\neq 0$, for some $x,y \in R$, for example one can
  use  Golod-Shafarevich theorem.
  Another way to find such radical rings is to fix a field $F$, to consider the free noncommutative $F$-algebra $S$  (without unit) generated by a finite set $X$,  and let
  $I$ be the two-sided ideal
 \[I=S^4=\{\sum_{i=1}^ns_{1,i}*s_{2,i}*s_{3,i}*s_{4,i}\mid s_{1,i},s_{2,i},s_{3,i},s_{4,i}\in S\}.\] Then the quotient
 $R=S/I$ is a nil-algebra ($a^4=0, \forall a \in R$), hence $R$ is a Jacobson radical ring.
 Moreover $x*y*x \neq 0,$ for any $x, y\in X,$ and therefore the corresponding brace $(R, +, \cdot)$ does not satisfy \textbf{lri}.

 Note that Theorem \ref{Thm_main}  provides us with a class of symmetric groups $(G,r)$ and their left braces $(G, +, \cdot)$ each of which
 is not two-sided, but satisfies \textbf{lri} and \textbf{Raut}, e.g. $G = G(X,r)$, where $(X,r)$ is a square-free solution of arbitrary cardinality and $\mpl (X,r) = 2$.

\begin{theorem}\label{6}
Let $F$ be a  field of characteristic two, and let $A$ be the free $F$-algebra (without identity element) generated by
the elements $x, y$.  Let $I$ be
the two-sided ideal of $A$ generated by the set
\[W = \{x*y*y+y*y*x, x*x*y+y*x*x\}\bigcup \{x_{1}*x_{2}*x_{3}*x_{4} \mid x_{1}, x_{2}, x_{3},
x_{4}\in \{x,y\}\},\] and let $R$ be the quotient ring $R=A/I$. Then
$(R, +, *)$ is a graded Jacobson radical ring and the associated
brace $(R, +, \cdot )$ satisfies  condition \textbf{Raut}  but the symmetric group
$(R, r_R)$
does not satisfy \textbf{lri}. Moreover, $\mpl (R, r_R) = 3$
\end{theorem}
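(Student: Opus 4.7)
The plan is to verify the four claims in turn, exploiting that $R$ is a nilpotent graded ring of index at most four. All generators of $I$ are homogeneous (two in degree three, all length-four monomials in degree four), so $I$ is a graded ideal and the quotient $R = R_1 \oplus R_2 \oplus R_3$ inherits a grading. Because $W$ contains every length-four monomial in $\{x, y\}$, we have $R^4 = 0$, so $R$ is a nil ring and hence a Jacobson radical ring. The ring $R$ is generated by $\{x, y\} \subseteq R_1$, and the two degree-three generators $x*y*y + y*y*x$ and $x*x*y + y*x*x$ are linearly independent inside the $8$-dimensional space of length-three monomials, so $\dim_F R_3 = 6 \neq 0$. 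By Proposition~\ref{PropGraded_mpl} we conclude $\mpl(R, r_R) = 3$.

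For the failure of \textbf{lri}, I would invoke Theorem~\ref{ThmLRI}: the condition \textbf{lri} on $(R, r_R)$ is equivalent to the identity $a*c*a = 0$ on $R$. The monomial $x*y*x$ is clearly not in the span of the two degree-three generators of $I$, hence $x*y*x \neq 0$ in $R$, and \textbf{lri} fails.

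The core of the proof is verifying \textbf{Raut}. By Theorem~\ref{ThmE}(2), this is equivalent to $a*c*b^c + b*c*a^c = 0$ for all $a, b, c \in R$. My strategy is to reduce to the degree-one content. Decompose $a = a_1 + a_2 + a_3$ and similarly $b$ and $c$. Since $R^4 = 0$, we have $R_i * R_j * R_k = 0$ whenever $i + j + k \geq 4$; in particular $a * c \in R_{\geq 2}$, so the only surviving summand of $(a * c) * b^c$ pairs $(a*c)_2 = a_1 * c_1$ with the degree-one component of $b^c$. From Convention~\ref{convRadring-twosidedbrace} one computes $a^b - a = ({}^ab)^{-1} * a \in R * R \subseteq R_{\geq 2}$, valid for arbitrary (possibly inhomogeneous) $a$, so $(b^c)_1 = b_1$. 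Therefore $a * c * b^c = a_1 * c_1 * b_1$, and symmetrically $b * c * a^c = b_1 * c_1 * a_1$; hence \textbf{Raut} is equivalent to the identity $u * w * v + v * w * u = 0$ for all $u, v, w \in R_1$.

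By trilinearity this last identity need only be checked on the basis $\{x, y\}$ of $R_1$. In characteristic two the cases with $u = v$ are trivial, and the remaining two nontrivial substitutions produce exactly the defining relations $x*x*y + y*x*x = 0$ and $x*y*y + y*y*x = 0$ of $I$. Hence \textbf{Raut} holds in $R$. The main obstacle is the reduction in the preceding paragraph: observing that $(b^c)_1 = b_1$ even when $b$ is inhomogeneous, and hence that the triple product collapses to its purely degree-one content. Once that is in hand, the two defining relations chosen in degree three, together with characteristic two, are precisely what is needed to validate \textbf{Raut} while keeping $x*y*x \neq 0$ so that \textbf{lri} fails.
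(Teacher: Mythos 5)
Your proof is correct and follows essentially the same route as the paper's: Proposition~\ref{PropGraded_mpl} for the multipermutation level, Theorem~\ref{ThmLRI} together with $x*y*x\neq 0$ for the failure of \textbf{lri}, and Theorem~\ref{ThmE} reduced to a check on the generators $x,y$ for \textbf{Raut}. The only differences are that you spell out the reduction of the \textbf{Raut} identity to degree-one components (a step the paper asserts without justification) and that you replace the paper's Gr\"obner-basis computation of an $F$-basis of $R$ by the direct observation that the degree-three part of $I$ is spanned by the two degree-three relators, so $x*y*x\neq 0$.
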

\begin{proof} Let $X=\{ x,y\}$. Observe that,  $R= (R, +, *)$ is a graded radical
ring $R=\oplus_{i=1}^{\infty} R_{i}$ with $R_{i}=0$ for every
$i>3$, and $R_1=$ Span$_F X$. By Proposition~\ref{PropGraded_mpl},
$\mpl (R, r_R) = 3$, since $R_{3}\neq 0$.
 It is easy to show that $W$ is a Groebner basis of the ideal $I$ w.r.t.
 the degree-lexicographic order on the free semigroup $\langle x,y\rangle$. (Here the semigroup multiplication is denoted by $*$, and we assume $x > y$).
Hence  the set
\[x,\; y,\; x*x, \; x*y,\; y*x, \;y*y, \; y*y*y,\; y*y*x,\;  y*x*y,\; y*x*x,\; x*y*x,\; x*x*x\]
project to an $F$- basis of $R$, considered as an $F$- vector space. In particular,
 $x*y*x\neq 0, y*x*y\neq 0$ in $R$, hence by Theorem~\ref{ThmLRI}, $R$ doesn't satisfy \textbf{lri}.
We shall show that $R$ satisfies \textbf{Raut}. By Theorem
\ref{ThmE}   it will be enough to show \[a*b*c + c*b*a =0,
\;\;\forall a, b, c \in X.\] Clearly, at least two of the elements
$a,b,c$ coincide. If $a=c$ ($a=b=c$ is also possible) then $a*b*c +
c*b*a = a*b*a +  a*b*a =0$, since the field $F$  has characteristic
$2$. If $a=b\neq c$, then $a*b*c + c*b*a = a*a*c + c*a*a = 0$ holds
in $R$, since by construction the element $a*a*c + c*a*a \in A$ is
contained in the ideal $I$.
 Similarly, if $b = c\neq a$ one has $a*b*c + c*b*a = a*b*b + b*b*a = 0$. We have shown that $a*b*c + c*b*a= 0$ for all $a, b, c \in X,$  therefore
 $R$ satisfies condition \textbf{Raut}.
\end{proof}

\begin{theorem}\label{8}
Let $F$ be a  field of arbitrary characteristic, and let $A$ be the free $F$-algebra (without identity element) generated by
the elements $x, y$.  Let $I$ be
the two-sided ideal of $A$ generated by the set of monomials
\[W =  \{x_{1}*x_{2}*x_{3}*x_{4} \mid x_{1}, x_{2}, x_{3},
x_{4}\in \{x,y\}\},\]
 and let $(R, +, *)$ be the monomial algebra  $R=A/I$.
 Then $(R, +, *)$ is a graded Jacobson radical ring and the associated
brace $(R, +, \cdot )$ does not satisfy  condition \textbf{Raut}. Moreover, $\mpl (R, r_R) = 3$.
\end{theorem}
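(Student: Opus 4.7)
The plan is to verify the three assertions by exploiting the fact that $I$ is a \emph{monomial} ideal, so the graded structure of $R$ is immediate, and then to invoke Proposition~\ref{PropGraded_mpl} and Lemma~\ref{lemmaPropE} directly.

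First I would observe that, since $I$ is generated by monomials of length $4$, the quotient $R = A/I$ inherits the natural length grading from the free algebra and admits as $F$-basis precisely the (nonzero images of the) monomials of length $1$, $2$, and $3$ in the alphabet $\{x,y\}$. In particular $R = R_1 \oplus R_2 \oplus R_3$ with $R_3 \neq 0$ (for example, $x*x*x$ is a basis vector), $R_i = 0$ for $i \geq 4$, and every element of $R$ satisfies $a*a*a*a = 0$; hence $R$ is nil and therefore Jacobson radical. Moreover $R$ is generated as a ring by $\{x,y\} \subseteq R_1$, so Proposition~\ref{PropGraded_mpl} applies and yields $\mpl(R, r_R) = 3$ at once.

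For the failure of \textbf{Raut}, I would argue by contradiction. Suppose the associated two-sided brace $(R, +, \cdot)$ satisfies \textbf{Raut}. Lemma~\ref{lemmaPropE}(1) then forces the identity
\[ a_i * c_j * b_k + b_k * c_j * a_i = 0 \]
for all homogeneous $a_i \in R_i,\ c_j \in R_j,\ b_k \in R_k$ (this part of the lemma holds in any graded Jacobson radical ring, irrespective of the characteristic of $F$). Specialising to $a_1 = x$, $c_1 = y$, $b_1 = y$ (all in $R_1$), this collapses to
\[ x * y * y + y * y * x = 0 \quad \text{in } R. \]
But $x*y*y$ and $y*y*x$ are two distinct length-$3$ words in $\{x,y\}$, neither of which lies in the monomial ideal $I$, so their images in $R$ are two distinct basis vectors. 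Hence $x*y*y + y*y*x \neq 0$ in $R$ in any characteristic, the desired contradiction.

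I do not expect a genuine obstacle here. The only point that is even slightly delicate is the identification of the $F$-basis of $R$; because $I$ is a \emph{monomial} ideal, this is immediate and requires no Groebner basis computation (contrast Theorem~\ref{6}, where $W$ contained binomials and one had to argue explicitly that $W$ was a Groebner basis). Once the grading and the Jacobson radical property are in hand, the rest is a direct application of Proposition~\ref{PropGraded_mpl} together with a single homogeneous specialisation in Lemma~\ref{lemmaPropE}(1).
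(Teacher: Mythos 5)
Your proof is correct and follows essentially the same route as the paper: the monomial grading plus Proposition~\ref{PropGraded_mpl} gives $\mpl(R,r_R)=3$, and a homogeneous degree-one counterexample to the identity of Lemma~\ref{lemmaPropE}(1) gives the failure of \textbf{Raut} (the paper uses $x*x*y+y*x*x$ where you use $x*y*y+y*y*x$; both are nonzero sums of distinct basis monomials). If anything your citation is the more careful one: the paper invokes Theorem~\ref{PropE}, whose hypothesis excludes $2$-torsion in $(R,+)$, whereas Lemma~\ref{lemmaPropE}(1) holds in arbitrary characteristic, as the statement of the theorem requires.
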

\begin{proof}
Note first that $I$ is a monomial ideal, generated by the set $W$ of
all monomials of length $4$ in $A$, so $R= (R, +, *)$ is a graded
algebra, moreover as a nil-algebra, $R$ is a graded Jacobson radical
ring  $R=\oplus_{i=1}^{\infty} R_{i}$ with $R_{i}=0$ for every
$i>3$, and $R_1=$ Span$_F \{x,y\}$. By
Proposition~\ref{PropGraded_mpl}, this implies $\mpl (R, r_R) = 3$
(since $R_{3}\neq 0$).
 The set $W$ is a Groebner basis of $I$,
so the set of all words in $x,y$ of length $\leq 3$ projects to an
$F$- basis of $R$. In particular, $x*x*y + y*x*x$ is a nonzero
element of $R$, and setting $a=b=x, c = y$ we see that $a*b*c +
c*b*a= x*x*y + y*x*x \neq 0$ in $R$. It follows from
Theorem~\ref{PropE} that the two-sided brace $(R, +, \cdot)$ does
not satisfy \textbf{Raut}.
\end{proof}

$ $

{\small Ferran Ced{\' o}, Departament de Matem\`{a}tiques,
Universitat Aut\`{o}noma de Barcelona, 08193 Bellaterra (Barcelona),
Spain
\\

Tatiana Gateva-Ivanova,  American University in Bulgaria, 2700
Blagoevgrad,  and Institute of Mathematics and Informatics,
Bulgarian Academy of
Sciences, 1113 Sofia, Bulgaria \\

Agata Smoktunowicz, School of Mathematics, The University of
Edinburgh, James Clerk Maxwell Building, The Kings Buildings,
Mayfield Road EH9 3JZ, Edinburgh}
\\
E-mail: \email{ cedo@mat.uab.cat, Tatyana@aubg.edu,
A.Smoktunowicz@ed.ac.uk}

\end{document}